\newtheorem{theorem}{Theorem}[section]
\newtheorem{assumption}{Assumption}[section]
\newtheorem{proposition}{Proposition}[section]
\newtheorem{remark}{Remark}[section]
\newtheorem{lemma}{Lemma}[section]
\title{Short-time asymptotics for marginal distributions of semimartingales}
\date{January 2012}
\author{Amel Bentata\  and Rama Cont}
\begin{document}

\maketitle

\begin{abstract}
We study the short-time asymptotics of conditional expectations of
smooth and non-smooth functions of a  (discontinuous) Ito
semimartingale;  we compute the leading term in the asymptotics   in
terms of the local characteristics of the semimartingale. We derive
in particular the asymptotic behavior of call options with short
maturity in a semimartingale model: whereas the behavior of
\textit{out-of-the-money} options is found to be linear in time,
the short time asymptotics of \textit{at-the-money} options is shown
to depend on the fine structure of the semimartingale.
\end{abstract}
\tableofcontents
\newpage

\section{Introduction}

In applications such as stochastic control, statistics of processes
and mathematical finance, one is often interested in computing or
approximating conditional expectations of the type
\begin{equation}\label{chp4.eq.cond.margi}
\mathbb{E}\left[f(\xi_{t})|\mathcal{F}_{t_0}\right]
\end{equation}
where $\xi$ is a stochastic process. Whereas for Markov process
various well-known tools --partial differential equations, Monte
Carlo simulation, semigroup methods-- are available for the
computation and approximation of conditional expectations, such
tools do not carry over to the more general setting of
semimartingales. Even in the Markov case, if the state space is high
dimensional exact computations may be computationally prohibitive
and there has been a lot of interest in obtaining approximations of
\eqref{chp4.eq.cond.margi} as $t\to t_0$. Knowledge of such { \it
short-time asymptotics}  is very useful not only for computation of
conditional expectations but also for the estimation and calibration
of such models. Accordingly, short-time asymptotics for
\eqref{chp4.eq.cond.margi}  (which, in the Markov case, amounts to
studying transition densities of the process $\xi$) has been
previously studied for diffusion models \cite{bbf,bbf2,feng10},
L\'evy processes
\cite{jacod07,leandre87,ruschendorf02,bnhubalek08,lopez09,lopez12,tankov11},
Markov jump-diffusion models \cite{alos07,benhamou09} and
one-dimensional martingales \cite{nutz11}, using a variety of
techniques. The proofs of these results in the case of   L\'evy
processes makes heavy use of the independence of increments; proofs
in other case rely on the Markov property, estimates for heat
kernels for second-order differential operators or Malliavin
calculus. What is striking, however, is the similarity of the
results obtained in these different settings.

We reconsider here  the  short-time asymptotics of conditional
expectations in a more general framework which contains existing
models but allows to go beyond the Markovian setting and  to
incorporate  path-dependent features.  Such a framework is provided
by  the class of  {\it It\^{o} semimartingales}, which contains all
the examples cited above but allows the use the tools of stochastic
analysis. An  {\it It\^{o} semimartingale}  on a filtered
probability space $(\Omega,\mathcal{F},(\mathcal{F}_t)_{t\geq
0},\mathbb{P})$ is a stochastic process $\xi$ with the
representation
\begin{equation}\label{chp4.classeJ}
  \xi_t=\xi_0+\int_0^t \beta_s\,ds+\int_0^t \delta_s\,dW_s
  +\int_0^t\int_{\mathbb{R}^d} \kappa(y)\,\tilde{M}(ds\:dy)+ \int_0^t\int_{\mathbb{R}^d}\left(y-\kappa(y)\right)\,{M}(ds\:dy),
\end{equation}
where $\xi_0$ is in $\mathbb{R}^d$, $W$ is a standard
$\mathbb{R}^n$-valued Wiener process, $M$ is an integer-valued
random measure  on
  $[0,\infty]\times\mathbb{R}^d$ with compensator $\mu(\omega,dt,dy)=m(\omega,t,dy) dt$ and
$\tilde{M}=M-\mu$ its compensated random measure,  $\beta$ (resp.
$\delta$) is an adapted process with values in $\mathbb{R}^d$ (resp.
$M_{d\times n}(\mathbb{R})$) and
\begin{equation*}
\kappa(y)= \frac{y}{1+\|y\|^2}
\end{equation*}
is a truncation function.

 We study the short-time asymptotics of conditional expectations of the form \eqref{chp4.eq.cond.margi}  where $\xi$ is an Ito  semimartingale of the form \eqref{chp4.classeJ}, for various classes of functions $f:\mathbb{R}^d\to\mathbb{R}$.
First, we  prove a general result for the case of $f\in
C^2_b(\mathbb{R}^d,\mathbb{R})$. Then we will treat, when $d=1$, the
case of
\begin{equation}\label{chp4.eq.call.margi}
\mathbb{E}\left[(\xi_{t}-K)^+|\mathcal{F}_{t_0}\right],
\end{equation}
which 
corresponds to the value at $t_0$ of a call option with strike $K$
and maturity $t$ in a  model described by equation
(\ref{chp4.classeJ}). We show that whereas the behavior of
\eqref{chp4.eq.call.margi} in the case $K> \xi_{t_0}$ (
\textit{out-of-the-money} options) is linear in $t-t_0$,  the
asymptotics  in the case $K=\xi_{t_0}$ (which corresponds to
\textit{at-the-money} options) depends on the fine structure of the
semimartingale $\xi$ at $t_0$.  In particular, we show that for
continuous semimartingales the short-maturity asymptotics of
at-the-money options is determined by the local time of $\xi$ at
$t_0$.
In each case we identify the leading term in the asymptotics and
express this term in terms of the local characteristics of the
semimartingale at $t_0$.

Our results  unify various asymptotic results previously derived for
particular examples of stochastic models and extend them to  the
more general  case of a discontinuous semimartingale. In particular,
we show that the independence of increments or the Markov property
do not play any role in the derivation of such results.

Short-time asymptotics for expectations of the form
\eqref{chp4.eq.cond.margi} have been studied in the context of
statistics of processes \cite{jacod07,bnhubalek08}  and option
pricing
\cite{alos07,bbf,bbf2,benhamou09,gatheral2011,lopez09,lopez12,tankov11,nutz11}.
Berestycki, Busca and Florent \cite{bbf,bbf2} and Gatheral et al
\cite{gatheral2011} derive short maturity asymptotics for call
options when $\xi_t$ is a diffusion, using analytical methods.
Durrleman \cite{durrleman10} studied the asymptotics of implied
volatility in a   stochastic volatility model. Jacod \cite{jacod07}
derived asymptotics for (\ref{chp4.eq.cond.margi}) for various
classes of functions $f$, when $\xi_t$ is a L\'{e}vy process.
Figueroa-Lopez and Forde \cite{lopez09} and Tankov \cite{tankov11}
study the asymptotics of (\ref{chp4.eq.call.margi}) when $\xi_{t}$
is the exponential of a L\'{e}vy process.  Figueroa-Lopez and
Houdr\'e \cite{lopez09} also studies short-time asymptotic
expansions for (\ref{chp4.eq.cond.margi}), by iterating the
infinitesimal generator of the  L\'{e}vy process $\xi_t$.
Figueroa-Lopez and Forde \cite{lopez09}  extend these results and
derive a second order small-time expansion for out-of-the-money call
options under an exponential L\'{e}vy  model. Alos et al
\cite{alos07}  derive short-maturity expansions for call options and
implied volatility in a Heston model using Malliavin calculus.
Benhamou et al. \cite{benhamou09} derive short-maturity expansions
for call options in a model where $\xi$ is a Markov process whose
jumps are described by a compound Poisson process. More generally,
these results apply to processes with independent  increments or
Markov processes expressed as the solution of a stochastic
differential equation with regular coefficients.

Durrleman studied the convergence of implied volatility to spot
volatility in a stochastic volatility model with finite-variation
jumps \cite{durrleman08}. More recently, Nutz  and Muhle-Karbe
\cite{nutz11} study short-maturity asymptotics for call options in
the case where $\xi_t$ is a one-dimensional It\^o semimartingale
driven by a (one-dimensional) Poisson random measure whose L\'evy
measure is absolutely continuous.
 Their approach consists  in ``freezing" the characteristic triplet of $\xi$ at  $t_0$, approximating $\xi_t$ by the corresponding L\'{e}vy process and using the results cited above \cite{jacod07,lopez09} to derive asymptotics for call option prices.

Our contribution is  to extend these results  to the more general
case when $\xi$ is a $d$-dimensional semimartingale with jumps. By
using minimal assumptions on the process $\xi$, we put previous
results into perspective: in contrast to previous derivations, our
approach is purely based on It\^{o} calculus and makes no use of the
Markov property or independence of increments. Also, our
multidimensional setting allows to treat examples which are not
accessible using previous results such as \cite{nutz11}. For
instance, when studying index options in jump-diffusion models, one
considers an index $I_t=\sum w_i S^i_t$ where $(S^1,...,S^d)$ are
It\^{o} semimartingales.  In this framework, $I$ is indeed an
It\^{o} semimartingale whose stochastic integral representation is
implied by those of $S^i$  but it is naturally  represented in terms
of a $d$-dimensional integer-valued random measure, not a
one-dimensional Poisson random measure. Our setting provides a
natural framework for treating such examples.

\section{Short time asymptotics for conditional expectations}
\subsection{Main result}
We make the following assumptions on the characteristics of the
semimartingale $\xi$:
\begin{assumption}[Right-continuity of characteristics at $t_0$]\label{chp4.A} \ \\
\begin{equation*}
\lim_{t\to t_0,\, t>t_0}
\mathbb{E}\left[\|\beta_t-\beta_{t_0}\||\mathcal{F}_{t_0}\right]=0,\quad\quad\lim_{t\to
t_0,\, t>t_0} \mathbb{E}\left[\|\delta_t-\delta_{t_0}\|^2
|\mathcal{F}_{t_0}\right]=0,
\end{equation*}
where $ \|.\|\ $ denotes the Euclidean norm on $\mathbb{R}^d$ and
for
$\varphi\in\mathcal{C}_0^b(\mathbb{R}^d\times\mathbb{R}^d,\mathbb{R})$,
\begin{equation*}
\lim_{t\to t_0,\, t>t_0} \mathbb{E}\left[\int_{\mathbb{R}^d}
\|y\|^2\,\varphi(\xi_{t},y)\,m(t,dy)|\mathcal{F}_{t_0}\right]=
\int_{\mathbb{R}^d}  \|y\|^2\,\varphi(\xi_{t_0},y)\,m(t_0,dy).
\end{equation*}
\end{assumption}
The second requirement, which may be viewed as a weak (right)
continuity of $m(t,dy)$ along the paths of $\xi$, is satisfied for
instance if $m(t,dy)$ is absolutely continuous with a density which
is right-continuous in $t$ at $t_0$.
\begin{assumption}[Integrability condition]\label{chp4.H}  $\exists T> t_0,$
\begin{eqnarray*}
&& \mathbb{E}\left[\int_{t_0}^T\|\beta_s\|\,ds\Big|\mathcal{F}_{t_0}\right]<\infty,\quad\quad\mathbb{E}\left[\int_{t_0}^T\|\delta_s\|^2\,ds\Big|\mathcal{F}_{t_0}\right]<\infty, \\[0.15cm]
&& \mathbb{E}\left[\int_{t_0}^T\int_{\mathbb{R}^d}
\|y\|^2\,m(s,dy)\,ds\Big|\mathcal{F}_{t_0}\right]<\infty.
\end{eqnarray*}
\end{assumption}
Under these assumptions,  the following result describes the
asymptotic behavior of
$\mathbb{E}\left[f(\xi_t)|\mathcal{F}_{t_0}\right]$ when $t \to
t_0$: \clearpage
\begin{theorem}\label{chp4.th.devpt.asympt}
Under Assumptions \ref{chp4.A} and \ref{chp4.H}, for all
$f\in\mathcal{C}_b^2(\mathbb{R}^d,\mathbb{R})$,
\begin{equation} \label{chp4.devpt.asympt}
 \lim_{t\downarrow t_0}\frac{1}{t-t_0}\,\left(\mathbb{E}\left[f(\xi_t)|\mathcal{F}_{t_0}\right]-f(\xi_{t_0})\right) = \mathcal{L}_{t_0}f(\xi_{t_0}).
\end{equation}
where $\mathcal{L}_{t_0 }$ is the (random) integro-differential
operator  given   by
  \begin{eqnarray}\label{chp4.L1.eq}
\forall f\in \mathcal{C}_b^2(\mathbb{R}^d,\mathbb{R}), \quad     \mathcal{L}_{t_0}f(x)&=&\beta_{t_0}.\nabla f(x)+\frac{1}{2}\mathrm{tr}\left[{}^t\delta_{t_0}\delta_{t_0}\nabla^2f\right](x)\\[0.1cm]
      &+&\int_{\mathbb{R}^d}[f(x+y)-f(x)-\frac{1}{1+\|y\|^2}\,y.\nabla f(x)]m(t_0,dy).\nonumber
  \end{eqnarray}
\end{theorem}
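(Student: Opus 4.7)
The natural starting point is It\^o's formula. Applied to $f(\xi_t)$ with $f\in\mathcal{C}_b^2$ and the semimartingale $\xi$ given by (\ref{chp4.classeJ}), it produces
\[
f(\xi_t)-f(\xi_{t_0})=\int_{t_0}^t\mathcal{L}_s f(\xi_{s-})\,ds+N_t,
\]
where $\mathcal{L}_s$ has exactly the form (\ref{chp4.L1.eq}) but with the characteristics evaluated at time $s$, and $N_t$ collects the Brownian stochastic integral $\int_{t_0}^t\nabla f(\xi_{s-})\cdot\delta_s\,dW_s$ together with the compensated jump integral $\int_{t_0}^t\!\int[f(\xi_{s-}+y)-f(\xi_{s-})]\,\tilde M(ds\,dy)$. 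The key algebraic observation is that the two random-measure integrals in (\ref{chp4.classeJ}) combine with the jump-sum in It\^o's formula in such a way that the truncation $\kappa$ survives only inside the integrand of $\mathcal{L}_s$, exactly as in (\ref{chp4.L1.eq}).

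Next I would argue that $N$ is a true martingale on $[t_0,T]$, so that $\mathbb{E}[N_t\mid\mathcal{F}_{t_0}]=0$. Boundedness of $\nabla f$ together with Assumption \ref{chp4.H} handles the Brownian part; for the compensated jump part, the bound $|f(x+y)-f(x)|\le\|\nabla f\|_\infty\|y\|$ (or $\le 2\|f\|_\infty$) combined with Assumption \ref{chp4.H} controls the predictable quadratic variation. Taking conditional expectation and applying Fubini,
\[
\frac{1}{t-t_0}\mathbb{E}[f(\xi_t)-f(\xi_{t_0})\mid\mathcal{F}_{t_0}]=\frac{1}{t-t_0}\int_{t_0}^t\mathbb{E}[\mathcal{L}_s f(\xi_{s-})\mid\mathcal{F}_{t_0}]\,ds,
\]
and by the elementary fact that $\frac{1}{t-t_0}\int_{t_0}^t g(s)\,ds\to g(t_0^+)$ for any locally integrable $g$ right-continuous at $t_0$, it suffices to show that $\mathbb{E}[\mathcal{L}_s f(\xi_{s-})\mid\mathcal{F}_{t_0}]\to\mathcal{L}_{t_0}f(\xi_{t_0})$ as $s\downarrow t_0$, together with an integrable upper bound permitting the time-averaging step, which is supplied directly by Assumption \ref{chp4.H}.

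For the drift and diffusion terms of $\mathcal{L}_s f$, this convergence follows immediately from the boundedness of $\nabla f,\nabla^2 f$, the right-continuity of $\xi$, and the first two clauses of Assumption \ref{chp4.A}. The main obstacle is the jump integrand
\[
g(x,y):=f(x+y)-f(x)-\kappa(y)\cdot\nabla f(x).
\]
Taylor's theorem gives $|g(x,y)|\le\tfrac{1}{2}\|\nabla^2 f\|_\infty\|y\|^2$ for $\|y\|$ small, while $g$ is globally bounded; thus $g(x,y)=\|y\|^2\varphi(x,y)$ with $\varphi$ bounded, but $\varphi$ is typically discontinuous at $y=0$, so the third clause of Assumption \ref{chp4.A} (which requires $\varphi\in\mathcal{C}_0^b$) cannot be applied directly. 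I would therefore split the jump integral at $\|y\|=\epsilon$: on $\{\|y\|\le\epsilon\}$ the contribution is dominated by $\tfrac{1}{2}\|\nabla^2 f\|_\infty\int_{\|y\|\le\epsilon}\|y\|^2\,m(s,dy)$, which is $o_\epsilon(1)$ uniformly in $s$ by Assumption \ref{chp4.H}; on $\{\|y\|>\epsilon\}$ one multiplies $\varphi$ by a smooth cutoff vanishing near zero so that the product lies in $\mathcal{C}_0^b$, and Assumption \ref{chp4.A} then delivers convergence of the conditional expectation. Sending $\epsilon\to 0$ closes the argument.
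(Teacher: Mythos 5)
Your proposal follows essentially the same route as the paper -- It\^o's formula, vanishing of the martingale terms in conditional expectation under Assumption \ref{chp4.H}, Fubini, and Lemma \ref{lemme.cad} reducing everything to right-continuity at $t_0$ of $s\mapsto\mathbb{E}[\mathcal{L}_s f(\xi_s)\mid\mathcal{F}_{t_0}]$ -- and in fact treats the jump term (which the paper dispatches with ``similarly'') more carefully via the $\epsilon$-cutoff. One small correction: the small-jump remainder $\int_{\{\|y\|\le\epsilon\}}\|y\|^2\,m(s,dy)$ is not controlled ``uniformly in $s$ by Assumption \ref{chp4.H}'' (that assumption only bounds a time-integrated quantity); instead apply the third clause of Assumption \ref{chp4.A} to a compactly supported continuous cutoff equal to $1$ on $\{\|y\|\le\epsilon\}$ and supported in $\{\|y\|\le 2\epsilon\}$, which gives $\limsup_{s\downarrow t_0}$ of this remainder at most $\int_{\{\|y\|\le 2\epsilon\}}\|y\|^2\,m(t_0,dy)\to 0$ as $\epsilon\to 0$, and this suffices for your sandwich argument.
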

Before proving Theorem \ref{chp4.th.devpt.asympt}, we recall a
useful lemma:
\begin{lemma}\label{lemme.cad}
Let $f:\mathbb{R}^+\to\mathbb{R}$ be right-continuous at $0$, then
\begin{equation}
\lim_{t\to 0} \frac{1}{t} \int_0^t f(s)\,ds = f(0).
\end{equation}
\end{lemma}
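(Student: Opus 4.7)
The plan is to express the quantity $\frac{1}{t}\int_0^t f(s)\,ds - f(0)$ as a single integral by using the trivial identity $f(0) = \frac{1}{t}\int_0^t f(0)\,ds$, which yields
\begin{equation*}
\frac{1}{t}\int_0^t f(s)\,ds - f(0) = \frac{1}{t}\int_0^t \bigl(f(s) - f(0)\bigr)\,ds.
\end{equation*}
Once the problem is recast this way, it reduces to showing that the right-hand side vanishes as $t\downarrow 0$, which is a routine $\varepsilon$-$\delta$ argument.

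The key step is then to invoke right-continuity of $f$ at $0$: given $\varepsilon>0$, pick $\delta>0$ such that $|f(s)-f(0)|<\varepsilon$ for every $s\in[0,\delta]$. For any $t\in(0,\delta]$ I would then estimate
\begin{equation*}
\left|\frac{1}{t}\int_0^t f(s)\,ds - f(0)\right| \;\leq\; \frac{1}{t}\int_0^t |f(s)-f(0)|\,ds \;\leq\; \frac{1}{t}\cdot t\cdot \varepsilon \;=\; \varepsilon.
\end{equation*}
Since $\varepsilon$ is arbitrary, this gives $\lim_{t\downarrow 0}\frac{1}{t}\int_0^t f(s)\,ds = f(0)$.

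There is essentially no obstacle; this is a standard Cesàro-type averaging result (a one-sided analogue of the Lebesgue differentiation theorem at a point of continuity). The only tacit requirement is that $f$ be Lebesgue-integrable on some interval $[0,\delta]$ so that the integrals make sense — right-continuity at $0$ alone is not enough, but in the intended application of the lemma (where $f$ will arise as a conditional expectation that is controlled by the integrability conditions in Assumption \ref{chp4.H}) this will be automatic. I would therefore present the argument as above, with a brief remark that local integrability is implicit.
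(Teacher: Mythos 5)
Your proof is correct. It is essentially the elementary, unpacked version of the paper's argument: the paper simply writes $\frac{1}{t}\int_0^t f(s)\,ds = \frac{1}{t}\left(F(t)-F(0)\right)$ for $F$ a primitive of $f$ and declares the limit to be the right derivative of $F$ at $0$, ``which is $f(0)$ by right continuity of $f$.'' That last assertion is precisely the one-sided Lebesgue differentiation statement your $\varepsilon$--$\delta$ estimate proves directly, so the two arguments have the same mathematical content; yours just does not hide the work inside an appeal to the fundamental theorem of calculus. Your closing remark that right-continuity at $0$ alone does not suffice and that local integrability of $f$ near $0$ must be assumed (and is supplied in the applications by Assumption 2.2) is a genuine, if minor, improvement in precision over the paper, which tacitly assumes $f$ admits a primitive without comment.
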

\begin{proof}
Let $F$ denote the primitive of $f$, then
\begin{equation*}
\frac{1}{t} \int_0^t f(s)\,ds= \frac{1}{t} \left(F(t)-F(0)\right).
\end{equation*}
Letting $t\to 0^+$, this is nothing but the right derivative at $0$
of $F$, which is $f(0)$ by right continuity of $f$.
\end{proof}

We can now prove Theorem \ref{chp4.th.devpt.asympt}.
\begin{proof} of Theorem \ref{chp4.th.devpt.asympt}\\
We first note that, by replacing $\mathbb{P}$ by the conditional
measure $\mathbb{P}_{|\mathcal{F}_{t_0}}$  given
$\mathcal{F}_{t_0}$, we may replace the conditional expectation in
(\ref{chp4.devpt.asympt}) by an expectation with respect to the
marginal distribution of $\xi_t$ under
${\mathbb{P}}_{|\mathcal{F}_{t_0}}$. Thus, without loss of
generality, we put $t_0=0$ in the sequel and consider the case where
${\mathcal{F}_0}$ is the $\sigma$-algebra generated by all
$\mathbb{P}$-null sets. Let
$f\in\mathcal{C}^2_b(\mathbb{R}^d,\mathbb{R})$.
 It\^{o}'s formula yields
    \begin{eqnarray*}
      f(\xi_t)&=&f(\xi_0)+\int_0^t \nabla f(\xi_{s^-})d\xi_s^i+\frac{1}{2}\int_0^t
      {\rm tr}\left[\nabla^2 f(\xi_{s^-})\,{}^t\delta_s\delta_s\right]\,ds\\[0.1cm]
      &+&\sum_{s\leq t} \left[f(\xi_{s^-}+\Delta
        \xi_s)-f(\xi_{s^-})-\sum_{i=1}^d \frac{\partial f}{\partial x_i}(\xi_{s^-})\Delta  \xi_s^i \right]\\[0.1cm]
      &=&f(\xi_0)+\int_0^t \nabla f(\xi_{s^-}).\beta_{s}\,ds+
  \int_0^t \nabla f(\xi_{s^-}).\delta_{s}dW_s\\[0.1cm]
  &+&\frac{1}{2}\int_0^t {\rm tr}\left[\nabla^2 f(\xi_{s^-})\,{}^t\delta_s\delta_s\right]\ \,ds\\[0.1cm]
  &+&\int_0^t\int_{\mathbb{R}^d}\nabla f(\xi_{s^-}).\kappa(y)\,\tilde{M}(ds\,dy)\\[0.1cm]
  &+&\int_0^t\int_{\mathbb{R}^d}\left(f(\xi_{s^-}+y)-f(\xi_{s^-})-\kappa(y).\nabla
    f(\xi_{s^-})\right)\,M(ds\,dy).
    \end{eqnarray*}
 We note that
\begin{itemize}
\item since $\nabla f$ is bounded and given Assumption \ref{chp4.H}, $\int_0^t\int_{\mathbb{R}^d}\nabla f(\xi_{s^-}).\kappa(y)\,\tilde{M}(ds\,dy)$ is  a square-integrable martingale.
\item since $\nabla f$ is bounded and given Assumption \ref{chp4.H}, $\int_0^t \nabla f(\xi_{s^-}).\delta_{s} dW_s$ is a martingale.
\end{itemize}
Hence, taking  expectations, we obtain
\begin{eqnarray*}
  \mathbb{E}\left[f(\xi_t)\right]&=&\mathbb{E}\left[f(\xi_0)\right]+\mathbb{E}\left[\int_0^t \nabla f(\xi_{s^-}).\beta_{s}\,ds\right]+ \mathbb{E}\left[\frac{1}{2}\int_0^t {\rm tr}\left[\nabla^2 f(\xi_{s^-}){}^t\delta_s\delta_s\right]\,ds\right]\\[0.1cm]
  &+&\mathbb{E}\left[\int_0^t\int_{\mathbb{R}^d}\left(f(\xi_{s^-}+y)-f(\xi_{s^-})-\kappa(y).\nabla
    f(\xi_{s^-})\right)\,M(ds\,dy)\right]\\[0.1cm]
&=&\mathbb{E}\left[f(\xi_0)\right]+\mathbb{E}\left[\int_0^t \nabla
  f(\xi_{s^-}).\beta_{s}\,ds\right]+\mathbb{E}\left[\frac{1}{2}\int_0^t {\rm tr}\left[\nabla^2
  f(\xi_{s^-})\,{}^t\delta_s\delta_s\right]\,ds\right]\\[0.1cm]
&+&\mathbb{E}\left[\int_0^t\int_{\mathbb{R}^d}\left(f(\xi_{s^-}+y)-f(\xi_{s^-})-\kappa(y).\nabla
    f(\xi_{s^-})\right)\,m(s,dy)\,ds\right],
\end{eqnarray*}
that is
         \begin{equation}\label{chp4.dynkin.no.project}
 \mathbb{E}\left[f(\xi_t)\right]=\mathbb{E}\left[f(\xi_0)\right]+\mathbb{E}\left[\int_0^t \mathcal{L}_sf(\xi_{s})\,ds\right].
 \end{equation}
where $\mathcal{L}$ denote the integro-differential operator given,
for all $t\in [t_0,T]$ and for all $f\in
\mathcal{C}_b^2(\mathbb{R}^d,\mathbb{R})$, by
  \begin{equation}
   \begin{split}
      \mathcal{L}_{t}f(x)&=\beta_{t}.\nabla f(x)+ \frac{1}{2}\mathrm{tr}\left[{}^t\delta_{t}\delta_{t}\nabla^2f\right](x)\\[0.1cm]
      &+\int_{\mathbb{R}^d}[f(x+y)-f(x)-\frac{1}{1+\|y\|^2}\,y.\nabla f(x)]m(t,dy),
   \end{split}
  \end{equation}
Equation (\ref{chp4.dynkin.no.project}) yields
\begin{eqnarray*}
&&\frac{1}{t}\,\mathbb{E}\left[f(\xi_t)\right]-\frac{1}{t}f(\xi_0)-\mathcal{L}_0f(\xi_0)\\[0.1cm]
&=&\mathbb{E}\left[\frac{1}{t}\int_0^t ds\, \left(\nabla f(\xi_{s}).\beta_s-\nabla f(\xi_{0}).\beta_0\right)\right]\\[0.1cm]
      &+&\frac{1}{2}\,\mathbb{E}\left[\frac{1}{t}\int_0^t \,ds\,{\rm tr}\left[\nabla^2
          f(\xi_{s})\,{}^t\delta_{s}\delta_{s}-\nabla^2
          f(\xi_{0})\,{}^t\delta_{0}\delta_{0}\right]\right] \\[0.1cm]
      &+&\mathbb{E}\Big[\int_{\mathbb{R}^d}\frac{1}{t}\int_0^t\,\,ds\,\big[m(s,dy)\,\left(f(\xi_{s}+y)-f(\xi_{s})-\kappa(y).\nabla
        f(\xi_{s})\right)\\[0.1cm]
&& \quad\quad\quad\quad\quad\quad\quad\quad\quad -
m(0,dy),\left(f(\xi_{0}+y)-f(\xi_{0})-\kappa(y).\nabla
        f(\xi_{0})\right)\big]\Big].
\end{eqnarray*}
Define
\begin{eqnarray*}
\Delta_1(t)&=&\mathbb{E}\left[\frac{1}{t}\int_0^t ds\, \left(\nabla f(\xi_{s}).\beta_s-\nabla f(\xi_{0}).\beta_0\right)\right],\\[0.1cm]
\Delta_2(t)&=&\frac{1}{2}\,\mathbb{E}\left[\frac{1}t\int_0^t
\,ds\,{\rm tr}\left[\nabla^2
          f(\xi_{s^-})\,{}^t\delta_{s}\delta_{s}-\nabla^2
          f(\xi_{0})\,{}^t\delta_{0}\delta_{0}\right]\right],\\[0.1cm]
\Delta_3(t)&=&\mathbb{E}\Big[\int_{\mathbb{R}^d}\frac{1}{t}\int_0^t\,\,ds\,\big[m(s,dy)\,\left(f(\xi_{s}+y)-f(\xi_{s})-\kappa(y).\nabla
        f(\xi_{s^-})\right)\\[0.1cm]
&& \quad\quad\quad\quad\quad\quad \quad\quad\quad-
m(0,dy)\,\left(f(\xi_{0}+y)-f(\xi_{0})-\kappa(y).\nabla.
        f(\xi_{0})\right)\big]\Big].
\end{eqnarray*}
Thanks to Assumptions \ref{chp4.A} and \ref{chp4.H},
\begin{equation*}
\mathbb{E}\left[\int_0^t ds\,\left|\nabla f(\xi_{s}).\beta_s-\nabla
f(\xi_{0}).\beta_0\right|\right]\leq\mathbb{E}\left[\int_0^t
ds\,\|\nabla f\|\left( \|\beta_s\|+\|\beta_0\|\right)\right]<\infty.
\end{equation*}
 Fubini's theorem then applies:
\begin{equation*}
\Delta_1(t)=\frac{1}{t}\int_0^t ds\,\mathbb{E}\left[ \nabla
f(\xi_{s}).\beta_s-\nabla f(\xi_{0}).\beta_{0}\right].
\end{equation*}
Let us prove that
\begin{equation*}
\begin{split}
g_1:[0,T[&\to \mathbb{R}\\[0.1cm]
t&\to \mathbb{E}\left[ \nabla f(\xi_{t}).\beta_t-\nabla
f(\xi_{0}).\beta_0\right],
\end{split}
\end{equation*}
is right-continuous at $0$ with $g_1(0)=0$, yielding $\Delta_1(t)\to
0$ when $t\to 0^+$ if one applies Lemma \ref{lemme.cad}.

\begin{equation}\label{g1s}
\begin{split}
\left|g_1(t)\right|&=\left|\mathbb{E}\left[ \nabla f(\xi_{t}).\beta_t-\nabla f(\xi_{0}).\beta_{0}\right]\right|\\[0.1cm]
&=\left|\mathbb{E}\left[ \left(\nabla f(\xi_{t})-\nabla f(\xi_{0})\right).\beta_{0}+\nabla f(\xi_{t}).\left(\beta_t-\beta_{0}\right)\right]\right|\\[0.1cm]
&\leq \|\nabla
f\|_{\infty}\,\mathbb{E}\left[\|\beta_t-\beta_{0}\|\right] +
\|\beta_{0}\|\,\|\nabla^2 f\|_{\infty}\,
\mathbb{E}\left[\|\xi_{t}-\xi_{0}\|\right],
\end{split}
\end{equation}
where $\|\|_{\infty}$ denotes the supremum norm on
$\mathcal{C}^2_b(\mathbb{R}^d,\mathbb{R})$. Assumption \ref{chp4.A}
implies that:
\begin{equation*}
\lim_{t\to 0^+} \mathbb{E}\left[\|\beta_t-\beta_{0}\|\right]=0.
\end{equation*}
Thanks to Assumption \ref{chp4.H}, one may decompose $\xi_t$ as
follows
\begin{equation}
\begin{split}
\xi_t&=\xi_0+ A_t+M_t,\\[0.1cm]
A_t&= \int_0^t \left(\beta_s\,ds+\int_{\mathbb{R}^d}(y-\kappa(y))\,m(s,dy)\right)\,ds,\\[0.1cm]
M_t&=\int_0^t \delta_s\,dW_s +\int_0^t\int_{\mathbb{R}^d} y
\,\tilde{M}(ds\:dy),
\end{split}
\end{equation}
where $A_t$ is of finite variation and $M_t$ is a local martingale.
First, applying Fubini's theorem (using Assumption \ref{chp4.H}),
\begin{eqnarray*}
\mathbb{E}\left[ \|A_{t}\|\right]&\leq& \mathbb{E}\left[ \int_0^t \|\beta_s\|\,ds\right]+\mathbb{E}\left[ \int_0^t \int_{\mathbb{R}^d}\|y-\kappa(y)\|\, m(s,dy)\,ds\right]\\[0.1cm]
&=&\int_0^t ds\,\mathbb{E}\left[ \|\beta_s\|\right]+\int_0^t
\,ds\,\mathbb{E}\left[ \int_{\mathbb{R}^d}\|y-\kappa(y)\|\,
m(s,dy)\right].
\end{eqnarray*}
Thanks to Assumption \ref{chp4.A}, one observes that if
$s\in[0,T[\to \mathbb{E}\left[\|\beta_s-\beta_0\|\right]$ is
right-continuous at 0 so is $s\in[0,T[\to
\mathbb{E}\left[\|\beta_s\|\right]$. Furthermore, Assumption
\ref{chp4.A} yields that
$$s\in[0,T[\to \mathbb{E}\left[  \int_{\mathbb{R}^d}\|y-\kappa(y)\|\, m(s,dy) \right]$$
is right-continuous at 0 and Lemma \ref{lemme.cad} implies that
$$
\lim_{t\to 0^+} \mathbb{E}\left[ \|A_{t}\|\right]=0.
$$
Furthermore, writing $M_t=(M_t^1,\cdots,M_t^d)$,
\begin{equation*}
\mathbb{E}\left[\|M_t\|^2\right]= \sum_{1\leq i \leq
d}\,\mathbb{E}\left[|M_t^i|^2\right].
\end{equation*}
Burkholder's inequality \cite[Theorem IV.73]{protter} implies that
there exists $C>0$ such that
\begin{eqnarray*}
\sup_{s\in[0,t]} \mathbb{E}\left[|M_s^i|^2\right]&\leq& C\,\mathbb{E}\left[[M^i,M^i]_t\right]\\[0.1cm]
&=& C\,\mathbb{E}\left[\int_0^tds\,
|\delta_s^i|^2+\int_0^tds\,\int_{\mathbb{R}^{d}} |y_i|^2
m(s,dy)\right].
\end{eqnarray*}
Using Assumption \ref{chp4.H} we may apply Fubini's theorem to
obtain
\begin{eqnarray*}
\sup_{s\in[0,t]} \mathbb{E}\left[\|M_t\|^2\right]&\leq& C\,\sum_{1\leq i \leq d}\,\mathbb{E}\left[\int_0^tds\, |\delta_s^i|^2\right]+\mathbb{E}\left[\int_0^tds\,\int_{\mathbb{R}^{d}} |y_i|^2 m(s,dy)\right]\\[0.1cm]
&=& C\,\left(\mathbb{E}\left[\int_0^tds\, \|\delta_s\|^2\right]+\mathbb{E}\left[\int_0^tds\,\int_{\mathbb{R}^{d}} \|y\|^2 m(s,dy)\right]\right)\\[0.1cm]
&=& C\,\left(\int_0^tds\,
\mathbb{E}\left[\|\delta_s\|^2\right]+\int_0^tds\,\mathbb{E}\left[\int_{\mathbb{R}^{d}}
\|y\|^2 m(s,dy)\right]\right).
\end{eqnarray*}
Thanks to Assumption \ref{chp4.A}, Lemma \ref{lemme.cad} yields
$$
\lim_{t\to 0^+} \mathbb{E}\left[\|M_t\|^2\right]=0.
$$
Using the Jensen inequality, one obtains
\begin{equation*}
\mathbb{E}\left[\|M_t\|\right]=\mathbb{E}\left[\sqrt{\sum_{1\leq i
\leq d} |M_t^i|^2}\right]\leq\sqrt{\mathbb{E}\left[\sum_{1\leq i
\leq d} |M_t^i|^2\right]} =\mathbb{E}\left[\|M_t\|^2\right].
\end{equation*}
Hence,
$$
\lim_{t\to 0^+} \mathbb{E}\left[\|M_t\|\right]=0,
$$
and
$$
\lim_{t\to 0^+} \mathbb{E}\left[\|\xi_{t}-\xi_{0}\|\right]\leq
\lim_{t\to 0^+} \mathbb{E}\left[\|A_t\|\right] +\lim_{t\to 0^+}
\mathbb{E}\left[\|M_t\|\right]=0.
$$
Going back to the inequalities (\ref{g1s}), one obtains
\begin{equation*}
\lim_{t\to 0^+} g_1(t)= 0.
\end{equation*}
Similarly, $\Delta_2(t)\to 0$ and $\Delta_3(t)\to 0$ when $t\to
0^+$. This ends the proof.
\end{proof}

\begin{remark}
In applications where a process is constructed as the solution to a
stochastic differential equation driven by a Brownian motion and a
Poisson random measure, one usually starts from a representation of
the form
\begin{equation}\label{chp4.classe.K} 
 \zeta_t=\zeta_0+\int_0^t \beta_s\,ds+\int_0^t \delta_s\,dW_s+\int_0^t\int\psi_s(y)\,\tilde{N}(ds\:dy), 
\end{equation}
where $\xi_0\in\mathbb{R}^d$, $W$ is a standard
$\mathbb{R}^n$-valued Wiener process, $\beta$ and $\delta$ are
non-anticipative c\`{a}dl\`{a}g processes, $N$ is a Poisson random
measure on $[0,T]\times\mathbb{R}^d$ with intensity   $\nu(dy)\,dt$
where $\nu$ is a L\'evy measure
\begin{equation*}
\int_{\mathbb{R}^d} \left(1\wedge \|y\|^2\right)
\nu(dy)<\infty,\qquad \tilde{N}=N-\nu(dy) dt,
\end{equation*}
and  $\psi:[0,T]\times\Omega\times \mathbb{R}^d\mapsto \mathbb{R}^d$
is a predictable random function representing jump amplitude. This
representation is different from \eqref{chp4.classeJ}, but
\cite[Lemma 2]{bentatacont09} shows that one can switch from the
representation (\ref{chp4.classe.K}) to the representation
(\ref{chp4.classeJ}) in an explicit manner.

In particular, if one rewrites Assumption \ref{chp4.A} in the
framework of equation (\ref{chp4.classe.K}), one recovers the
Assumptions of \cite{nutz11} as a special case.

\end{remark}

\begin{remark}
It is sufficient for $f$ to be locally bounded on the neighborhood
of $\xi_0$.
\end{remark}
\subsection{Some consequences and examples}




If we have further information on the behavior of $f$ in the
neighborhood of  $\xi_{0}$, then the quantity $L_{0}f(\xi_{0})$ ca
be computed  more explicitly. We summarize some commonly encountered
situations in the following Proposition.
\begin{proposition}
Under Assumptions \ref{chp4.A} and \ref{chp4.H},
\begin{enumerate}
\item If $f(\xi_0)=0$ and $\nabla f(\xi_0)=0$, then
\begin{equation}
 \lim_{t\to 0^+}\frac{1}{t}\,\mathbb{E}\left[f(\xi_t)\right] =\frac{1}{2}{\rm tr}\left[{}^t\delta_{0}\delta_{0}\,\nabla^2 f(\xi_{0})\right] +\int_{\mathbb{R}^d}f(\xi_{0}+y)\,m(0,dy).
\end{equation}
\item If furthermore $\nabla^2f(\xi_{0})=0$, then
\begin{equation}
 \lim_{t\to 0^+}\frac{1}{t}\,\mathbb{E}\left[f(\xi_t)\right]= \int_{\mathbb{R}^d} f(\xi_{0}+y)\,m(0,dy).
\end{equation}
\end{enumerate}
\end{proposition}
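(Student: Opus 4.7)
The plan is to invoke Theorem~\ref{chp4.th.devpt.asympt} directly and to simplify the explicit expression for $\mathcal{L}_{t_0}f(\xi_{t_0})$ under each layer of vanishing hypotheses. Exactly as in the proof of Theorem~\ref{chp4.th.devpt.asympt}, I first pass to the conditional probability measure and set $t_0=0$ without loss of generality; the hypothesis $f(\xi_0)=0$ then identifies $\lim_{t\to 0^+}\frac{1}{t}\mathbb{E}[f(\xi_t)]$ with $\lim_{t\to 0^+}\frac{1}{t}(\mathbb{E}[f(\xi_t)]-f(\xi_0))$, so Theorem~\ref{chp4.th.devpt.asympt} yields
\begin{equation*}
\lim_{t\to 0^+}\tfrac{1}{t}\mathbb{E}[f(\xi_t)]=\beta_0\cdot\nabla f(\xi_0)+\tfrac{1}{2}\mathrm{tr}\bigl[{}^t\delta_0\delta_0\nabla^2 f(\xi_0)\bigr]+\int_{\mathbb{R}^d}\!\Bigl[f(\xi_0+y)-f(\xi_0)-\tfrac{y}{1+\|y\|^2}\cdot\nabla f(\xi_0)\Bigr]m(0,dy).
\end{equation*}

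For part (1), I substitute $f(\xi_0)=0$ and $\nabla f(\xi_0)=0$ into this formula: the drift contribution $\beta_0\cdot\nabla f(\xi_0)$ vanishes, and inside the compensated jump integral both the $-f(\xi_0)$ term and the linear correction $-\frac{y}{1+\|y\|^2}\cdot\nabla f(\xi_0)$ drop out, so the integrand collapses pointwise to $f(\xi_0+y)$. What survives is exactly the claimed sum of the Hessian--diffusion term and $\int_{\mathbb{R}^d} f(\xi_0+y)\,m(0,dy)$. Part (2) is then immediate: the extra hypothesis $\nabla^2 f(\xi_0)=0$ additionally annihilates $\tfrac{1}{2}\mathrm{tr}[{}^t\delta_0\delta_0\nabla^2 f(\xi_0)]$, leaving only the jump integral.

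There is essentially no real obstacle in this argument. The only point worth stating explicitly is well-definedness of $\int_{\mathbb{R}^d} f(\xi_0+y)\,m(0,dy)$: under the vanishing hypotheses the compensated integrand appearing in $\mathcal{L}_0 f(\xi_0)$ is pointwise equal to $f(\xi_0+y)$, so absolute convergence of the simplified integral coincides with that of the compensated one. The latter follows from $f\in\mathcal{C}_b^2$ together with the second-order Taylor estimate $|f(\xi_0+y)|\leq \tfrac{1}{2}\|\nabla^2 f\|_\infty\|y\|^2$ near the origin, the trivial bound $|f(\xi_0+y)|\leq\|f\|_\infty$ away from it, and the integrability properties of $m(0,\cdot)$ implicit in Assumption~\ref{chp4.A}. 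Thus the proposition is a direct specialisation of Theorem~\ref{chp4.th.devpt.asympt} under successive vanishing conditions on $f$ at $\xi_0$.
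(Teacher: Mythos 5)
Your proof is correct and follows exactly the same route as the paper: apply Theorem~\ref{chp4.th.devpt.asympt}, write out $\mathcal{L}_{0}f(\xi_{0})$ explicitly, and observe that the hypotheses $f(\xi_0)=0$, $\nabla f(\xi_0)=0$ (and then $\nabla^2 f(\xi_0)=0$) annihilate the drift and compensator terms. Your added remark on the absolute convergence of $\int f(\xi_0+y)\,m(0,dy)$ via the second-order Taylor bound is a small but welcome elaboration that the paper leaves implicit.
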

\begin{proof}
Applying Theorem \ref{chp4.th.devpt.asympt},
$\mathcal{L}_{0}f(\xi_{0})$ writes
  \begin{equation*}
   \begin{split}
      \mathcal{L}_{0}f(\xi_{0})&=\beta_0.\nabla f(\xi_{0})+\frac{1}{2}{\rm tr}\left[\nabla^2 f(\xi_{0})\,{}^t\delta_{0}\delta_{0}\right](\xi_0)\\[0.1cm]
      &+\int_{\mathbb{R}^d}[f(\xi_{0}+y)-f(\xi_{0})-\frac{1}{1+\|y\|^2}\,y.\nabla f(\xi_{0})]m(0,dy).
   \end{split}
  \end{equation*}
The proposition follows immediately.
\end{proof}
\begin{remark} As observed by Jacod \cite[Section 5.8]{jacod07}  in the setting of L\'{e}vy processes,
 if $f(\xi_{0})=0$ and $\nabla f(\xi_{0})=0$, then $f(x)= O(\|x-\xi_{0}\|^2)$. If furthermore $\nabla^2f(\xi_{0})=0$, then $f(x)= o(\|x-\xi_{0}\|^2)$.
\end{remark}
Let us now compute in a more explicit manner the asymptotics of
\eqref{chp4.eq.cond.margi} for specific semimartingales.

\subsubsection{Functions of a Markov process}
An important situations which often arises in applications is when a
stochastic processe $\xi$ is driven by an underlying Markov process,
i.e.
\begin{equation}
\xi_t=f(Z_t)\quad \quad f\in C^2(\mathbb{R}^d,\mathbb{R}),
\end{equation}
where $Z_t$ is a Markov process, defined as the weak solution on
$[0,T]$ of a stochastic differential equation
\begin{equation}\label{chp4.classeK}
  \begin{split}
 Z_t&=Z_0+\int_0^t b(u,Z_{u-})\,du+\int_0^t \Sigma(u,Z_{u-})\,dW_u\\[0.1cm]
    &+\int_0^t\int \psi(u,Z_{u-},y)\,\tilde{N}(du\ dy),
  \end{split}
  \end{equation}
where  $(W_t)$ is an n-dimensional Brownian motion, $N$ is a Poisson
  random measure on $[0,T]\times\mathbb{R}^d$ with L\'evy measure
  $\nu(y)\,dy$, $\tilde{N}$ the associated
compensated random measure, $\Sigma:[0,T]\times \mathbb{R}^d\mapsto
M_{d\times d}(\mathbb{R})$, $b:[0,T]\times \mathbb{R}^d\mapsto
\mathbb{R}^d$ and $\psi:[0,T]\times \mathbb{R}^d\times \mathbb{R}^d$
are measurable functions such that
\begin{equation}\label{chp4.diffeo}
\begin{split}
&\psi(.,.,0)=0\quad \quad\quad \psi(t,z,.)\:\mathrm{is\:a\,} \mathcal{C}^1(\mathbb{R}^d,\mathbb{R}^d)-{\rm diffeomorphism}\\[0.1cm]
& \forall t\in[0,T],\quad \mathbb{E}\left[\int_0^t \int_{\{\|y\|\geq
1\}} \sup_{z\in\mathbb{R}^d}\left(1\wedge
\|\psi(s,z,y)\|^2\right)\,\nu(y)\,dy\,ds\right] <\infty.
\end{split}
\end{equation}
In this setting, as shown in \cite{bentatacont09}, one may verify
the regularity assumptions
 Assumption \ref{chp4.A} and Assumption \ref{chp4.H} by requiring mild and easy-to-check assumptions on the coefficients:
\begin{assumption}\label{chp4.A.Z}
 $b(.,.)$, $\Sigma(.,.)$ and $\psi(.,.,y)$ are continuous in the neighborhood of $(0,Z_0)$ 

\end{assumption}
\begin{assumption}\label{chp4.H.Z} There exist $T>0,R>0$ such that
\begin{eqnarray*}
 &\mathrm{Either} & \quad \forall t\in [0,T] \quad \inf_{\|z-Z_0\|\leq R}\,\inf_{x\in\mathbb{R}^d,\,\|x\|=1} {}^tx.\Sigma(t,z).x>0 \\[0.1cm]
 & \mathrm{or}& \Sigma\equiv 0.
\end{eqnarray*}
\end{assumption}
We then obtain the following  result:
\begin{proposition}
Let $f\in {\mathcal C}^2_b(\mathbb{R}^d,\mathbb{R})$ such that
\begin{equation}
\forall z\in\mathbb{R}^d, \quad \frac{\partial f}{\partial
z_{d}}(z)\ne 0.
\end{equation}
Define
\begin{equation*}
  \begin{cases}
    \beta_0&=\nabla f(Z_0).b(0,Z_0)+\frac{1}{2}{\rm tr}\left[\nabla^2 f(Z_{0}){}^t\Sigma(0,Z_0)\Sigma(0,Z_0)\right]\\[0.15cm]
    &+\int_{\mathbb{R}^d}\left(f(Z_0+\psi(0,Z_0,y))-f(Z_0)-\psi(0,Z_0,y).\nabla f(Z_0)\right)\,\nu(y)\,dy,\\[0.15cm]
    \delta_0&=\|\nabla f(Z_{0})\Sigma(0,Z_0)\|,\\[0.15cm]
  \end{cases}
\end{equation*}
and the measure  $m(0,.)$ via
\begin{equation}
\begin{split}
&m(0,[u,\infty[)=\int_{\mathbb{R}^d} 1_{\{f(Z_0+\psi(0,Z_0,y))-f(Z_0)\geq u\}}\,\nu(y)\,dy \quad u>0,\\[0.10cm]
&m(0,[-\infty,u])=\int_{\mathbb{R}^d}
1_{\{f(Z_{0}+\psi(0,Z_{0},y))-f(Z_{0})\leq u\}}\,\nu(y)\,dy \quad
u<0.
\end{split}
\end{equation}
Under the Assumptions \ref{chp4.A.Z} and \ref{chp4.H.Z}, $\forall g
\in \mathcal{C}_b^2(\mathbb{R}^d,\mathbb{R})$,
\begin{equation}
\lim_{t\to 0^+}\frac{\mathbb{E}\left[g(\xi_t)\right]-g(\xi_{0})}{t}=
\beta_0\,g'(\xi_{0})+\frac{\delta_{0}^2}{2}\,g''(\xi_{0})+\int_{\mathbb{R}^d}[g(\xi_{0}+u)-g(\xi_0)-ug'(\xi_{0})]\,m(0,du).
\end{equation}
\end{proposition}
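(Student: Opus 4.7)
The plan is to realize $\xi_t = f(Z_t)$ as an It\^o semimartingale of the form (\ref{chp4.classeJ}), identify its local characteristics $(\beta,\delta,m)$ at $t=0$, verify that these satisfy Assumptions \ref{chp4.A} and \ref{chp4.H}, and then apply Theorem \ref{chp4.th.devpt.asympt} with $g$ in place of $f$. The passage from the representation (\ref{chp4.classe.K}) (in which $Z$ is given) to the canonical form (\ref{chp4.classeJ}) is exactly what \cite[Lemma 2]{bentatacont09} provides, as noted in the Remark following Theorem \ref{chp4.th.devpt.asympt}; I would invoke it both for $Z$ and, after applying It\^o's formula, for $\xi$.

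First I would apply It\^o's formula to $f(Z_t)$. The continuous martingale part of $\xi$ equals $\int_0^t \nabla f(Z_{s-})\Sigma(s,Z_{s-})\,dW_s$, so the diffusion coefficient of $\xi$ is $\delta_s=\|\nabla f(Z_{s-})\Sigma(s,Z_{s-})\|$, matching the statement at $s=0$. The jump random measure of $\xi$ is the image of $N$ under $(s,y)\mapsto(s,\,f(Z_{s-}+\psi(s,Z_{s-},y))-f(Z_{s-}))$, hence its $\mathbb{P}$-compensator is $m(s,du)\,ds$ where $m(s,\cdot)$ is the push-forward of $\nu(y)\,dy$ by $y\mapsto f(Z_{s-}+\psi(s,Z_{s-},y))-f(Z_{s-})$; evaluated at $s=0$ this reproduces exactly the measure $m(0,\cdot)$ of the statement. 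The drift of $\xi$ collects $\nabla f(Z_{s-})\cdot b(s,Z_{s-})$, the It\^o correction $\tfrac{1}{2}\mathrm{tr}[\nabla^2 f\,\Sigma\,{}^t\Sigma]$, and a compensation term $\int[f(Z_{s-}+\psi)-f(Z_{s-})-\nabla f(Z_{s-})\cdot\psi]\,\nu(y)\,dy$ produced by the change of compensator when switching from $N$ to the push-forward. Evaluated at $s=0$ this gives precisely the $\beta_0$ of the statement; the algebraic identity $u=\kappa(u)+(u-\kappa(u))$, together with $\int(|u|\wedge u^2)\,m(0,du)<\infty$ (which follows from (\ref{chp4.diffeo}) and $f\in \mathcal{C}_b^2$), accounts for the switch between the truncation $\kappa$ appearing in (\ref{chp4.classeJ}) and the ``bare'' truncation $u\mapsto u$ appearing in the limit formula.

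The next step is to verify Assumptions \ref{chp4.A} and \ref{chp4.H} for this representation of $\xi$. Assumption \ref{chp4.A.Z} yields right-continuity of $b(\cdot,\cdot)$, $\Sigma(\cdot,\cdot)$ and $\psi(\cdot,\cdot,y)$ at $(0,Z_0)$; combined with the c\`adl\`ag regularity of the paths of $Z$, the boundedness of $\nabla f$ and $\nabla^2 f$, and dominated convergence with dominating functions furnished by (\ref{chp4.diffeo}), this delivers the right-continuity of $\beta_s$, $\delta_s$ and of the integral against $m(s,dy)$ required in Assumption \ref{chp4.A}. The integrability in Assumption \ref{chp4.H} then follows directly from (\ref{chp4.diffeo}) and the boundedness of $f$ and its first two derivatives. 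Assumption \ref{chp4.H.Z}, together with the non-degeneracy $\partial f/\partial z_d\neq 0$, is what makes \cite{bentatacont09} applicable and guarantees that $\xi$ really does admit the canonical representation (\ref{chp4.classeJ}).

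Applying Theorem \ref{chp4.th.devpt.asympt} to $\xi$ and to $g\in\mathcal{C}_b^2(\mathbb{R},\mathbb{R})$ then yields the claimed limit. The main technical obstacle, in my view, is the bookkeeping between two representations and two truncation conventions: the pushed-forward compensator $m(s,du)$ inherits a non-trivial dependence on both $\psi$ and $f$, and it must be combined with the It\^o drift from the chain rule so that, after rearranging $\kappa(u)\leftrightarrow u$, the final $\beta_0$ coincides with the one in the statement. Everything else is essentially dominated/bounded convergence using the assumed regularity of $b,\Sigma,\psi$ and the boundedness of $f,\nabla f,\nabla^2 f$.
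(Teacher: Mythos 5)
Your proposal is correct and follows essentially the same route as the paper: write $\xi_t=f(Z_t)$ in the canonical form (\ref{chp4.classeJ}) with diffusion coefficient $\|\nabla f(Z_{t-})\Sigma(t,Z_{t-})\|$, drift collecting the It\^o correction and the compensation term, and jump compensator given by the push-forward of $\nu(y)\,dy$ under $y\mapsto f(Z_{t-}+\psi(t,Z_{t-},y))-f(Z_{t-})$, then check Assumptions \ref{chp4.A}--\ref{chp4.H} and invoke Theorem \ref{chp4.th.devpt.asympt}. The only difference is presentational: the paper outsources the decomposition to a proposition of \cite{bentatacont09}, which expresses the pushed-forward compensator through an explicit coarea-type density using the non-degeneracy $\partial f/\partial z_d\neq 0$, whereas you re-derive it via It\^o's formula and describe the compensator abstractly as an image measure --- the truncation bookkeeping you highlight is exactly the point handled there.
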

\begin{proof}
Under the conditions (\ref{chp4.diffeo}) and the Assumption
\ref{chp4.H.Z}, Proposition \ref{semi.decomp.f.de.markov} shows that
$\xi_t$ admits the semimartingale decomposition
\begin{equation*}
\xi_t=\xi_0+\int_0^t \beta_s\,ds+ \int_0^t  \delta_s \,dB_s
+\int_0^t \int u \,\tilde{K}(ds\:du),
\end{equation*}
where
\begin{equation*}
  \begin{cases}
    \beta_t&=\nabla f(Z_{t^-}).b(t,Z_{t-})+\frac{1}{2}{\rm tr}\left[\nabla^2 f(Z_{t-}){}^t\Sigma(t,Z_{t-})\Sigma(t,Z_{t-})\right]\\[0.15cm]
    &+\int_{\mathbb{R}^d}\left(f(Z_{t^-}+\psi(t,Z_{t-},y))-f(Z_{t^-})-\psi(t,Z_{t-},y).\nabla f(Z_{t^-})\right)\,\nu(y)\,dy,\\[0.15cm]
    \delta_t&=\|\nabla f(Z_{t-})\Sigma(t,Z_{t-})\|,\\[0.15cm]
  \end{cases}
\end{equation*}
and $K$ is an integer-valued random measure on
$[0,T]\times\mathbb{R}$ with compensator $k(t,Z_{t-},u)\,du\,dt$
defined via
\begin{eqnarray*}
k(t,Z_{t-},u)&=& \int_{\mathbb{R}^{d-1}} \left|{\rm det}\nabla_y\Phi(t,Z_{t-},(y_1,\cdots,y_{d-1},u))\right|\\[0.1cm]
&&\quad\quad\quad\quad\quad\quad\,\nu(\Phi(t,Z_{t-},(y_1,\cdots,y_{d-1},u)))\,dy_{1}\cdots\,dy_{d-1},
\end{eqnarray*}
with
\begin{equation*}
\begin{cases}
&\Phi(t,z,y)=\phi(t,z,\kappa_z^{-1}(y)) \quad\kappa_z^{-1}(y)=(y_1,\cdots,y_{d-1},F_z(y)),\\[0.1cm]
& F_z(y):\mathbb{R}^d\to\mathbb{R} \quad
f(z+(y_1,\cdots,y_{d-1},F_z(y)))-f(z)=y_d.
\end{cases}
\end{equation*}
From Assumption \ref{chp4.A.Z} it follows that Assumptions
\ref{chp4.A} and \ref{chp4.H} hold for $\beta_t$, $\delta_t$ and
$k(t,Z_t-,.)$ on $[0,T]$. Applying Theorem
\ref{chp4.th.devpt.asympt}, the result follows immediately.
\end{proof}
\begin{remark}
Benhamou et al. \cite{benhamou09} studied   the  case where $Z_t$ is
the solution of a `Markovian' SDE whose jumps are given by a
compound Poisson Process. The above results generalizes their result
to the (general) case where the jumps are driven by an arbitrary
integer-valued random measure.
\end{remark}
\subsubsection{Time-changed L\'evy processes}
Models based on time--changed L\'evy processes provide another class
of examples of non-Markovian models which have generated recent
interest in mathematical finance. Let $L_t$ be a real-valued L\'evy
process, $(b,\sigma^2,\nu)$ be its characteristic triplet, $N$ its
jump measure. Define
\begin{equation}
\xi_t= L_{\Theta_t} \qquad \Theta_t=\int_0^t \theta_s ds,
\end{equation}
where $(\theta_t)$ is a locally bounded $\mathcal{F}_t$-adapted
positive c\`{a}dl\`{a}g process, interpreted as the rate of time
change.
\begin{proposition}
If  \begin{equation} \int_{\mathbb{R}} |y|^2\,\nu(dy)<\infty\quad
{\rm and}\quad\lim_{t\to 0,\, t>0}
\mathbb{E}\left[|\theta_t-\theta_{0}|\right]=0\label{chp4.A.levy}
\end{equation}
then
\begin{equation}
\forall f\in\mathcal{C}_b^2(\mathbb{R},\mathbb{R}),\qquad \lim_{t\to
0^+}\frac{\mathbb{E}\left[f(\xi_t)\right]-f(\xi_{0})}{t}
 = \theta_0\,\mathcal{L}_{0}f(\xi_{0})
\end{equation}
where $ \mathcal{L}_{0}$ is the infinitesimal generator of the $L$:
  \begin{equation}
  \mathcal{L}_{0}f(x)=b\,f'(x)+\frac{\sigma^2}{2}\,f''(x)
      +\int_{\mathbb{R}^d}[f(x+y)-f(x)-\frac{1}{1+|y|^2}\,yf'(x)]\nu(dy).
  \end{equation}
\end{proposition}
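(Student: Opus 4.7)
The strategy is to recognize $\xi_t = L_{\Theta_t}$ as an It\^{o} semimartingale in the sense of \eqref{chp4.classeJ}, identify its differential characteristics, verify that Assumptions \ref{chp4.A} and \ref{chp4.H} hold, and then invoke Theorem \ref{chp4.th.devpt.asympt}. The point is that once the characteristics $(\beta_t,\delta_t,m(t,dy))$ of $\xi$ are computed, a factor $\theta_0$ should factor out of every term of $\mathcal{L}_{0}^{\xi}f(\xi_0)$ and reconstruct $\theta_0 \mathcal{L}_0 f(\xi_0)$.

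I would first write the L\'evy--It\^o decomposition of $L$, which under the assumption $\int_{\mathbb R}|y|^2\,\nu(dy)<\infty$ reads
\begin{equation*}
L_t = L_0 + bt + \sigma W_t + \int_0^t\!\!\int_{\mathbb R} y\,\tilde{N}(du\,dy) + \int_0^t\!\!\int_{\mathbb R}(y-\kappa(y))\,N(du\,dy),
\end{equation*}
where $N$ is the jump measure with intensity $\nu(dy)\,du$. Since $\Theta_t=\int_0^t\theta_s\,ds$ is a continuous, absolutely continuous time change with locally bounded density $\theta$, standard time-change results for semimartingales (or a direct computation using the martingale property of $W_{\Theta_\cdot}$ in the time-changed filtration and the definition of the jump measure of $L_{\Theta_\cdot}$) show that $\xi_t=L_{\Theta_t}$ admits a representation of the form \eqref{chp4.classeJ} with differential characteristics
\begin{equation*}
\beta_t = b\,\theta_t,\qquad \delta_t^2 = \sigma^2\,\theta_t,\qquad m(t,dy) = \theta_t\,\nu(dy).
\end{equation*}

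Next I would verify the two assumptions. Assumption \ref{chp4.H} is immediate: local boundedness of $\theta$ on some $[0,T]$ by a constant $C$ gives $\int_0^T \|\beta_s\|\,ds \leq |b|CT$, $\int_0^T\|\delta_s\|^2\,ds \leq \sigma^2 CT$, and $\int_0^T\!\int |y|^2 m(s,dy)\,ds \leq CT\int|y|^2\nu(dy)<\infty$. For Assumption \ref{chp4.A}, the right-continuity of $\beta_t=b\theta_t$ and of $\delta_t^2=\sigma^2\theta_t$ in $L^1$ follows directly from the hypothesis $\mathbb{E}[|\theta_t-\theta_0|]\to 0$. The weak continuity of the random measure reduces to showing
\begin{equation*}
\mathbb{E}\left[\theta_t \int_{\mathbb R}|y|^2\varphi(\xi_t,y)\,\nu(dy)\right]\;\longrightarrow\;\theta_0 \int_{\mathbb R}|y|^2\varphi(\xi_0,y)\,\nu(dy),
\end{equation*}
for bounded continuous $\varphi$. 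Here I would split into $\theta_t[\int|y|^2\varphi(\xi_t,y)\nu(dy)-\int|y|^2\varphi(\xi_0,y)\nu(dy)]$ plus $(\theta_t-\theta_0)\int|y|^2\varphi(\xi_0,y)\nu(dy)$; the second piece vanishes by the $L^1$-convergence of $\theta_t$, while the first is controlled by local boundedness of $\theta_t$ together with dominated convergence (with dominating function $2\|\varphi\|_\infty|y|^2\nu(dy)$), using that $\xi_t=L_{\Theta_t}\to L_0=\xi_0$ almost surely as $\Theta_t\downarrow 0$.

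Finally, applying Theorem \ref{chp4.th.devpt.asympt} to $f\in\mathcal{C}_b^2(\mathbb R,\mathbb R)$ gives
\begin{equation*}
\lim_{t\downarrow 0}\frac{\mathbb{E}[f(\xi_t)]-f(\xi_0)}{t} = b\theta_0 f'(\xi_0) + \tfrac{1}{2}\sigma^2\theta_0 f''(\xi_0) + \theta_0\!\int_{\mathbb R}\!\!\bigl[f(\xi_0+y)-f(\xi_0)-\kappa(y)f'(\xi_0)\bigr]\nu(dy),
\end{equation*}
and factoring $\theta_0$ yields $\theta_0\mathcal{L}_0 f(\xi_0)$. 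The main obstacle I anticipate is the rigorous justification of the semimartingale characteristics of the time-changed process $L_{\Theta_t}$ in the enlarged filtration; I would either cite a time-change theorem (e.g.\ Jacod's result on absolutely continuous time changes, which preserves the semimartingale property and multiplies differential characteristics by $\theta_t$) or give a brief direct verification via the It\^o isometry for $W_{\Theta_\cdot}$ and a pushforward computation for the jump measure.
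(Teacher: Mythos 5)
Your proposal follows essentially the same route as the paper: time-change the L\'evy--It\^o decomposition of $L$ to identify the differential characteristics $(\beta_t,\delta_t^2,m(t,dy))=(b\,\theta_t,\sigma^2\theta_t,\theta_t\,\nu(dy))$, check Assumptions \ref{chp4.A} and \ref{chp4.H} (which you in fact carry out in more detail than the paper, which cites \cite{bentatacont09} and simply asserts the verification), and apply Theorem \ref{chp4.th.devpt.asympt} so that $\theta_0$ factors out of every term of the operator. The only blemish is your displayed L\'evy--It\^o decomposition, which contains both $\int y\,\tilde{N}$ and $\int(y-\kappa(y))\,N$ and thus double-counts the large jumps; the compensated integrand should be $\kappa(y)$ rather than $y$ (or, since $\int|y|^2\nu(dy)<\infty$, one may drop the second integral and absorb $\int(y-\kappa(y))\,\nu(dy)$ into the drift), but this slip does not affect the characteristics you ultimately use or the conclusion.
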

\begin{proof}
Considering the L\'evy-It\^{o} decomposition of $L$:
\begin{eqnarray*}
 L_t&=&\left(b-\int_{\{|y|\leq 1\}} \left(y -\kappa(y)\right)\,\nu(dy)\right)\,t+\sigma W_t\\[0.1cm]
&+&  \int_0^t\int_{\mathbb{R}} \kappa(z) \tilde{N}(ds dz) +
\int_0^t\int_{\mathbb{R}} \left(z-\kappa(z)\right) N(ds dz),
\end{eqnarray*}
then, as shown in \cite{bentatacont09}, $\xi$ has the representation
\begin{equation*}
  \begin{split}
    \xi_t &= \xi_0+ \int_0^t \sigma \sqrt{\theta_s}\,dZ_s +
\int_0^t \left(b-\int_{\{|y|\leq 1\}} \left(y -\kappa(y)\right)\,\nu(dy)\right)\theta_s\,ds   \\[0.1cm]
    & +\int_0^{t}\int_{\mathbb{R}} \kappa(z)\theta_s \,\tilde{N}(ds\,dz) + \int_0^{t}\int_{\mathbb{R}} \left(z-\kappa(z)\right)\theta_s\, N(ds\,dz).
 \end{split}
 \end{equation*}
where $Z$ is a Brownian motion. With the notation of equation
(\ref{chp4.classeJ}), one identifies
\begin{equation*}
  \beta_t=\left(b-\int_{\{|y|\leq 1\}} \left(y -\kappa(y)\right)\,\nu(dy)\right)\,\theta_t,\quad \delta_t=\sigma\,\sqrt{\theta_t},\quad m(t,dy)=\theta_t\,\nu(dy).
\end{equation*}
If  \eqref{chp4.A.levy} holds, then Assumptions \ref{chp4.A} and
\ref{chp4.H} hold for $(\beta,\delta,m)$ and Theorem
\ref{chp4.th.devpt.asympt} may be applied to obtain the result.
\end{proof}

\section{Short-maturity asymptotics for call options}

Consider a (strictly positive)  price process $S$ whose dynamics
under the pricing measure $\mathbb{P}$ is given by a stochastic
volatility model with jumps:
\begin{equation}\label{chp4.stochmodel}
S_t = S_0 +\int_0^t r(s) S_{s^-} ds + \int_0^t S_{s^-}\delta_s dW_s
+ \int_0^t\int_{-\infty}^{+\infty} S_{s^-}(e^y - 1) \tilde{M}(ds\
dy),
\end{equation}
where  $r(t)>0$ represents a (deterministic) bounded discount rate.
For convenience, we shall assume that
$r\in\mathcal{C}_0^b(\mathbb{R}^+,\mathbb{R}^+)$. $\delta_t$
represents the volatility process and $M$ is an integer-valued
random measure with compensator $\mu(\omega;
dt\,dy)=m(\omega;t,dy)\,dt$, representing jumps in the log-price,
and $\tilde{M}=M-\mu$ its compensated random measure. We make the
following assumptions on the characteristics of $S$:
\begin{assumption}[Right-continuity  at $t_0$]\label{chp4.A.exp}
\begin{equation*}
\lim_{t\to t_0,\, t>t_0} \mathbb{E}\left[|\delta_t-\delta_{t_0}|^2
|\mathcal{F}_{t_0}\right]=0.
\end{equation*}
For all
$\varphi\in\mathcal{C}_0^b(\mathbb{R}^+\times\mathbb{R},\mathbb{R})$,
\begin{equation*}
\lim_{t\to t_0,\, t>t_0} \mathbb{E}\left[\int_{\mathbb{R}}
\left(e^{2y}\wedge|y|^2\right)\,\varphi(S_{t},y)\,m(t,dy)|\mathcal{F}_{t_0}\right]=
\int_{\mathbb{R}}
\left(e^{2y}\wedge|y|^2\right)\,\varphi(S_{t_0},y)\,m(t_0,dy).
\end{equation*}
\end{assumption}
\begin{assumption}[Integrability condition]\label{chp4.I}
 \begin{equation*}
\exists
T>t_0,\quad\mathbb{E}\left[\exp{\left(\frac{1}{2}\int_{t_0}^T\delta_s^2\,ds
        + \int_{t_0}^T ds \int_{\mathbb{R}} (e^y-1)^2 m(s,dy)\right)}| \mathcal{F}_{t_0}\right]<\infty\quad.
  \end{equation*}
\end{assumption}
We recall that the value $C_{t_0}(t,K)$ at time $t_0$ of a call
option with expiry $t>t_{0}$ and strike $K>0$ is given by
\begin{equation}\label{chp4.def.call}
C_{t_0}(t,K)=e^{-\int_{t_0}^t
r(s)\,ds}\mathbb{E}[\max(S_t-K,0)|\mathcal{F}_{t_0}].
\end{equation}
The discounted asset price
 $$\hat{S}_t=e^{-\int_{t_0}^t r(u)\,du}\,S_t,$$ is the stochastic exponential of the martingale $\xi$ defined by
$$\xi_t=\int_{0}^t \delta_s\,dW_s+\int_{0}^t\int (e^y-1)\tilde{M}(ds\,dy).$$
Under Assumption \ref{chp4.I}, we have
$$
\mathbb{E}\left[\exp{\left(\frac{1}{2}\langle \xi,\xi\rangle^d_T +
\langle \xi,\xi\rangle^c_T\right)}\right]<\infty,
$$
where $\langle \xi,\xi\rangle^c$ and $\langle \xi,\xi\rangle^d$
denote the continuous and purely discontinuous parts of $[\xi,\xi]$
and \cite[Theorem 9]{protter08} implies that
$(\hat{S}_t)_{t\in[t_0,T]}$ is a $\mathbb{P}$-martingale. In
particular the expectation in \eqref{chp4.def.call} is finite.
\subsection{Out-of-the money call options}\label{chp4.section.1.dim.case.otm}

We first study the asymptotics of out-of-the money call options i.e.
the case where $K> S_{t_0}$. The main result is as follows:
\begin{theorem}[Short-maturity behavior of out-of-the money options]\label{chp4.th.lim.call.option : OTM}
Under Assumption \ref{chp4.A.exp} and Assumption \ref{chp4.I}, if
$S_{t_0} < K$ then
\begin{equation}\label{chp4.lim.option.euro.OTM}
\frac{1}{t-t_0}\,C_{t_0}(t,K)\underset{t\to t_0^+}{\longrightarrow}
\int_{0}^{\infty}  (S_{t_0}e^y-K)_+  m(t_0, dy).
\end{equation}
\end{theorem}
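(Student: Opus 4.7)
The plan is to deduce Theorem \ref{chp4.th.lim.call.option : OTM} from Theorem \ref{chp4.th.devpt.asympt} applied to smooth $C_b^2$ approximations of the call payoff $(x-K)_+$. Conditioning on $\mathcal{F}_{t_0}$ as in the proof of Theorem \ref{chp4.th.devpt.asympt}, I may take $t_0=0$ with $S_0:=S_{t_0}$ deterministic; since $r$ is bounded, $e^{-\int_0^t r(s)\,ds}=1+O(t)$ so the discount factor contributes only a lower-order correction, and it suffices to show
\[
\frac{1}{t}\,\mathbb{E}\bigl[(S_t-K)_+\bigr] \;\longrightarrow\; \int_{\mathbb{R}} (S_0 e^y - K)_+\, m(0,dy).
\]
It\^{o}'s formula applied to $\log S$ puts $S$ into the form \eqref{chp4.classeJ} with drift $r(t)S_{t^-}$ (up to a compensator correction), diffusion coefficient $S_{t^-}\delta_t$, and jump compensator $m^S(t,\cdot)$ equal to the image of $m(t,dy)$ under the map $y\mapsto S_{t^-}(e^y-1)$; Assumptions \ref{chp4.A.exp} and \ref{chp4.I} translate into Assumptions \ref{chp4.A} and \ref{chp4.H} for $S$.

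The OTM hypothesis $S_0<K$ leaves room to smooth the corner of $(x-K)_+$ at $x=K$ without disturbing a neighborhood of $S_0$. Fix $\epsilon>0$ with $K-2\epsilon>S_0$ and $A>K$, and construct $f_{\epsilon,A}^\pm \in C_b^2(\mathbb{R})$ satisfying
\[
f_{\epsilon,A}^-(x) \;\leq\; (x-K)_+\,\mathbf{1}_{x\leq A} \;\leq\; f_{\epsilon,A}^+(x),
\]
both vanishing on $(-\infty,\,K-2\epsilon]$ and coinciding with $(x-K)_+$ on $[K+\epsilon,\,A]$. Because $f_{\epsilon,A}^\pm$ together with its first two derivatives vanish at $S_0$, Theorem \ref{chp4.th.devpt.asympt} yields
\[
\frac{1}{t}\,\mathbb{E}\bigl[f_{\epsilon,A}^\pm(S_t)\bigr] \;\longrightarrow\; \int_{\mathbb{R}} f_{\epsilon,A}^\pm(S_0 e^y)\,m(0,dy),
\]
after changing variables from $m^S(0,\cdot)$ to $m(0,\cdot)$, and this sandwiches $\mathbb{E}[(S_t-K)_+\mathbf{1}_{S_t\leq A}]/t$.

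It remains to bound the tail $\mathbb{E}[(S_t-K)_+\mathbf{1}_{S_t>A}]$ uniformly in small $t$. Assumption \ref{chp4.I} together with Assumption \ref{chp4.A.exp} yields $\int e^{2y}\,m(0,dy)<\infty$, and hence $\eta(A):=\int_{S_0 e^y>A}S_0 e^y\,m(0,dy)\to 0$ as $A\to\infty$. To propagate this to a process-level estimate, I would combine the compensator structure of the big jumps of $S$ with a sub-Gaussian estimate on its continuous component to argue that $\{S_t>A\}$ can essentially be realized only through a jump of size $O(A)$ (the probability of a continuous excursion of this size in time $t$ being $o(t^n)$ for any $n$), yielding $\limsup_{t\downarrow 0}\frac{1}{t}\,\mathbb{E}[(S_t-K)_+\mathbf{1}_{S_t>A}]\leq C\eta(A)$. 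Combining with the previous sandwich and letting first $\epsilon\downarrow 0$, then $A\uparrow\infty$, dominated convergence against $m(0,dy)$ produces the claimed limit. The main obstacle is precisely this tail step: Theorem \ref{chp4.th.devpt.asympt} is stated for bounded test functions only, while the call payoff grows linearly, so the contribution of rare but large upward excursions of $S_t$ must be controlled by hand, uniformly in $t$, using the integrability coming from Assumption \ref{chp4.I}.
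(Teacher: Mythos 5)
Your overall strategy coincides with the paper's: sandwich the payoff between $C^2$ approximations that vanish in a neighborhood of $S_0$ (so that only the jump integral survives in $\mathcal{L}_0$), apply Theorem \ref{chp4.th.devpt.asympt}, and pass to the limit in the approximation parameter. The paper's variant uses a single family $f_n$ with $f_n(x)=(x-K)^+$ for $|x-K|>1/n$ and $(x-K)^+\leq f_n\leq 1/n$ near $K$, obtaining the upper bound directly and the lower bound via a Chebyshev estimate $\mathbb{P}(|S_t-K|\leq 1/n)\leq C\,\mathbb{E}[(S_t-S_0)^2]=O(t)$ (itself obtained from Theorem \ref{chp4.th.devpt.asympt} applied to $x\mapsto(x-S_0)^2$); your two-sided construction avoids that extra step, which is a small simplification. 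A minor slip: $f^-_{\epsilon,A}$ cannot both coincide with $(x-K)_+$ on all of $[K+\epsilon,A]$ and satisfy $f^-_{\epsilon,A}\leq (x-K)_+\mathbf{1}_{x\leq A}$ (the latter vanishes just above $A$ while $f^-_{\epsilon,A}(A)=A-K>0$); you should instead require $f^-_{\epsilon,A}\leq (x-K)_+$ everywhere and let it decay smoothly above $A$, which costs nothing.

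The genuine gap is the tail step, which you correctly flag as the main obstacle but do not close. The claimed estimate $\limsup_{t\downarrow 0}\,t^{-1}\mathbb{E}[(S_t-K)_+\mathbf{1}_{S_t>A}]\leq C\eta(A)$ is plausible, but the sketched justification does not work under the stated hypotheses: Assumptions \ref{chp4.A.exp} and \ref{chp4.I} give only an $L^2$-type and exponential-moment control on $\delta$, not boundedness, so the probability that the continuous part produces an excursion of fixed size by time $t$ is in general only $O(t)$ (e.g.\ $\mathbb{P}(\int_0^t\delta_s^2\,ds\geq\epsilon)\leq \epsilon^{-1}\int_0^t\mathbb{E}[\delta_s^2]\,ds$), not $o(t^n)$; a "sub-Gaussian estimate on the continuous component" is simply not available here, and the first-jump decomposition you allude to also requires an argument since $M$ is a general integer-valued random measure, not a Poisson one. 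The cleanest repair — and, implicitly, what the paper does — is to observe that the proof of Theorem \ref{chp4.th.devpt.asympt} never uses $\|f\|_\infty$, only $\|\nabla f\|_\infty$, $\|\nabla^2 f\|_\infty$ and the $\|y\|^2$-integrability of the compensator (cf.\ the remark following the theorem), so the conclusion extends to functions with bounded first and second derivatives and linear growth. With this extension you can drop the truncation at $A$ altogether and work directly with approximations equal to $(x-K)_+$ away from $K$, as the paper does; this removes the tail obligation rather than discharging it.
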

This limit can also be expressed using the  exponential double tail
$ \psi_{t_0}$ of the compensator, defined as
\begin{equation}\label{chp4.def.psi.tail}
 \psi_{t_0}(z)=\int_{z}^{+\infty} dx\  e^x \int_x^{\infty} m(t_0,du) \quad z>0.
\end{equation}
Then, as shown in \cite[Lemma 1]{forward},
$$  \int_{0}^{\infty}  (S_{t_0}e^y-K)_+  m(t_0, dy)= S_{t_0}\psi_{t_0}\left(\ln{\left(\frac{K}{S_{t_0}}\right)}\right). $$
\begin{proof}
The idea is to apply Theorem \ref{chp4.th.devpt.asympt}  to smooth
approximations $f_n$ of the function $x\to(x-K)^+$ and conclude
using a dominated convergence argument.

First, as argued in the proof of Theorem \ref{chp4.th.devpt.asympt},
we put $t_0=0$ in the sequel and consider the case where
${\mathcal{F}_0}$ is the $\sigma$-algebra generated by all
$\mathbb{P}$-null sets. Applying the It\^{o} formula to
$X_t\equiv\ln{(S_t)}$, we obtain
\begin{eqnarray*}
  X_t&=&\ln{(S_0)}+\int_0^t \frac{1}{S_{s^-}}\,dS_s+\frac{1}{2}\int_0^t \frac{-1}{S_{s^-}^2}\,(S_{s-}\delta_s)^2\,ds\\[0.1cm]
  &+&\sum_{s\leq t} \left[\ln{(S_{s^-}+\Delta
      S_s)}-\ln{(S_{s^-})}-\frac{1}{S_{s^-}}\Delta S_s\right]\\[0.1cm]
  &=& \ln{(S_0)}+\int_0^t
  \left(r(s)-\frac{1}{2}\delta_s^2\right)\,ds+ \int_0^t\delta_s\,dW_s \\[0.1cm]
  &+&\int_0^t\int_{-\infty}^{+\infty} (e^y - 1)\tilde{M}(ds\:dy) -\int_0^t\int_{-\infty}^{+\infty}\left(e^y-1-y\right)\,M(ds\:dy)
\end{eqnarray*}
Note that there exists $C>0$ such that
\begin{equation*}
|e^y-1-y\,\frac{1}{1+|y|^2}|\leq C\,\left(e^y-1\right)^2.
\end{equation*}
Thanks to Jensen's inequality, Assumption \ref{chp4.I} implies that
this quantity is finite, allowing us to write
\begin{eqnarray*}
 &&\int_0^t\int_{-\infty}^{+\infty} (e^y - 1)\tilde{M}(ds\:
 dy)-\int_0^t\int_{-\infty}^{+\infty}\left(e^y-1-y\right)\,M(ds\:dy)\\[0.1cm]
&=&\int_0^t\int_{-\infty}^{+\infty} (e^y - 1)\tilde{M}(ds\:
 dy)-\int_0^t\int_{-\infty}^{+\infty}\left(e^y-1-y\,\frac{1}{1+|y|^2}\right)\,M(ds\:dy)\\[0.1cm]
&+&\int_0^t\int_{-\infty}^{+\infty} \left(y-y \frac{1}{1+|y|^2}\right)\,M(ds\:dy)\\[0.1cm]
&=&\int_0^t\int_{-\infty}^{+\infty} (e^y - 1)\tilde{M}(ds\:
 dy)-\int_0^t\int_{-\infty}^{+\infty}\left(e^y-1-y\,\frac{1}{1+|y|^2}\right)\,\tilde{M}(ds\:dy)\\[0.1cm]
&-&\int_0^t\int_{-\infty}^{+\infty}\left(e^y-1-y\,\frac{1}{1+|y|^2}\right)\,m(s,y)\,ds\:dy+\int_0^t\int_{-\infty}^{+\infty} \left(y-y \frac{1}{1+|y|^2}\right)\,M(ds\:dy)\\[0.1cm]
&=&\int_0^t\int_{-\infty}^{+\infty} y\,\frac{1}{1+|y|^2}\,\tilde{M}(ds\:dy)-\int_0^t\int_{-\infty}^{+\infty}\left(e^y-1-y\,\frac{1}{1+|y|^2}\right)\,m(s,y)\,ds\:dy\\[0.1cm]
&+&\int_0^t\int_{-\infty}^{+\infty} \left(y-y
\frac{1}{1+|y|^2}\right)\,M(ds\:dy).
\end{eqnarray*}
We can thus represent $X_t$ as in  (\ref{chp4.classeJ})):
\begin{equation}\label{chp4.stoch.log}
\begin{split}
  X_t&=X_0+\int_0^t \beta_s\,dt+\int_0^t \delta_s\,dW_s\\[0.1cm]
  &+\int_0^t\int_{-\infty}^{+\infty }y\,\frac{1}{1+|y|^2}\,\tilde{M}(ds\:dy)+ \int_0^t\int_{-\infty}^{+\infty}\left(y-y\,\frac{1}{1+|y|^2}\right)\,{M}(ds\:dy),
\end{split}
\end{equation}
with
\begin{equation*}
\beta_t=r(t)-\frac{1}{2}\delta_t^2-\int_{-\infty}^{\infty}\left(e^y-1-y\,\frac{1}{1+|y|^2}\right)\,m(t,y)\,dt\:dy.
\end{equation*}
Hence, if $\delta$ and $m(.,dy)$ satisfy Assumption \ref{chp4.A.exp}
then $\beta$, $\delta$ and $m(.,dy)$ satisfy Assumption
\ref{chp4.A}. Thanks to Jensen's inequality, Assumption \ref{chp4.I}
implies that $\beta$, $\delta$ and $m$ satisfy Assumption
\ref{chp4.H}. One may apply Theorem \ref{chp4.th.devpt.asympt} to
$X_t$ for any function of the form $f\circ exp$,
$f\in\mathcal{C}^2_b(\mathbb{R},\mathbb{R})$.
Let us introduce a family
$f_n\in\mathcal{C}^2_b(\mathbb{R},\mathbb{R})$ such that
\begin{equation*}
\begin{cases}
&f_n(x)= (x-K)^+ \quad\quad\quad |x-K|> \frac{1}{n}\\[0.1cm]
&(x-K)^+\leq f_n(x)\leq \frac{1}{n}\quad |x-K|\leq \frac{1}{n}.
\end{cases}
\end{equation*}
Then for $x\ne K$, $f_n(x)\underset{n\to \infty}{\longrightarrow}
(x-K)^+$. Define, for  $f\in C^\infty_0(\mathbb{R}^+,\mathbb{R})$,
\begin{equation}\label{chp4.integro.index}
  \begin{split}
    \mathcal{L}_0f(x)&=r(0)xf'(x)+\frac{x^2\delta_0^2}{2}f''(x)\\[0.1cm]
    &+\int_{\mathbb{R}}[f(xe^y)-f(x)-x(e^y-1).f'(x)]m(0,dy).
  \end{split}
\end{equation}
First,  observe that if $N_1\geq 1/|S_0-K|$,
$$\forall n\geq N_1, f_n(S_0)=(S_0-K)^+=0,  \quad {\rm so}$$
\begin{equation*}
\frac{1}{t}\mathbb{E}\left[(S_t-K)^+\right]\leq
\frac{1}{t}\mathbb{E}\left[f_n(S_t)\right]=\frac{1}{t}\left(\mathbb{E}\left[f_n(S_t)\right]-f_n(S_0)\right).
\end{equation*}
Letting $t\to 0^+$ yields
\begin{equation}\label{chp4.limsup}
\limsup_{t\to 0^+} \frac{1}{t}\,e^{-\int_{0}^t
r(s)\,ds}\,\mathbb{E}\left[(S_t-K)^+\right]\leq \mathcal{L}_0
f_n(S_0).
\end{equation}
Furthermore,
\begin{eqnarray*}
\mathbb{E}\left[(S_t-K)^+\right]&\geq& \mathbb{E}\left[f_n(S_t)1_{\{|S_t-K|>\frac{1}{n}\}}\right]\\[0.1cm]
&=&\mathbb{E}\left[f_n(S_t)\right]-\mathbb{E}\left[f_n(S_t)1_{\{|S_t-K|\leq \frac{1}{n}\}}\right]\\[0.1cm]
&\geq&\mathbb{E}\left[f_n(S_t)\right]-f_n(S_0)-\frac{1}{n}\mathbb{E}\left[1_{\{|S_t-K|\leq
\frac{1}{n}\}}\right].
\end{eqnarray*}
But
\begin{eqnarray*}
\mathbb{E}\left[1_{\{|S_t-K|\leq \frac{1}{n}\}}\right]&\leq& \mathbb{P}\left(S_t-K\geq  -\frac{1}{n}\right )\\[0.1cm]
&\leq&\mathbb{P}\left(S_t-S_0\geq K-S_0-\frac{1}{n}\right).
\end{eqnarray*}
There exists $N_2\geq 0$ such that for all $n\geq N_2$,
\begin{eqnarray*}
\mathbb{P}\left(S_t-S_0\geq K-S_0-\frac{1}{n}\right)&\leq &\mathbb{P}\left(S_t-S_0\geq \frac{K-S_0}{2}\right)\\[0.1cm]
&\leq&\left(\frac{2}{K-S_0}\right)^2\,\mathbb{E}\left[(S_t-S_0)^2\right],
\end{eqnarray*}
by the Bienaym\'{e}-Chebyshev inequality. Hence,
\begin{equation*}
\frac{1}{t}\mathbb{E}\left[(S_t-K)^+\right]\geq\frac{1}{t}\left(\mathbb{E}\left[f_n(S_t)\right]-f_n(S_0)\right)
-\frac{1}{n}\,\left(\frac{2}{K-S_0}\right)^2\,\frac{1}{t}\mathbb{E}\left[\phi(S_t)-\phi(S_0)\right],
\end{equation*}
with $\phi(x)=(x-S_0)^2$. Applying Theorem
\ref{chp4.th.devpt.asympt} yields
\begin{equation*}
\liminf_{t\to 0^+} \frac{1}{t}\,e^{-\int_{0}^t
r(s)\,ds}\,\mathbb{E}\left[(S_t-K)^+\right]\geq\mathcal{L}_0f_n(S_0)
-\frac{1}{n}\,\left(\frac{2}{K-S_0}\right)^2\,\mathcal{L}_0\phi(S_0).
\end{equation*}
Letting $n\to +\infty$,
\begin{equation*}
\lim_{t\to 0^+} \frac{1}{t}\,e^{-\int_{0}^t
r(s)\,ds}\,\mathbb{E}\left[(S_t-K)^+\right]=\lim_{n\to\infty}\mathcal{L}_0f_n(S_0).
\end{equation*}
Since  $S_0< K$, $f_n=0$ in a neighborhood of $S_0$ for $n\geq N_1$
so $f_n(S_0)=f''_n(S_0)=f'_n(S_0)=0$ and $\mathcal{L}_0f_n(S_0)$
reduces to
$$\mathcal{L}_0f_n(S_0)= \int_{\mathbb{R}}[f_n( S_0e^y)-f_n(S_0)]m(0,dy).$$
 A dominated convergence argument then yields
$$\lim_{n\to\infty}\mathcal{L}_0f_n(S_0) = \int_{\mathbb{R}}[ ( S_0e^y-K)_+-( S_0-K)_+ ]m(0,dy).$$
Using  integration by parts, this last expression may be rewritten
\cite[Lemma 1]{forward} as
$$ S_0\psi_{0}\left(\ln{\left(\frac{K}{S_0}\right)}\right) $$
where $\psi_{0}$ is given by \eqref{chp4.def.psi.tail}. This ends
the proof.
\end{proof}


\begin{remark}
Theorem \ref{chp4.th.lim.call.option : OTM} also applies to
\textit{in-the-money} options, with a slight modification: for $K<
S_{t_0}$,
\begin{equation}
\frac{1}{t-t_0}\,\left(C_{t_0}(t,K)-(S_{t_0}-K)\right)\underset{t\to
t_0^+}{\longrightarrow} r(t_0)\,S_{t_0}+
S_{t_0}\psi_{t_0}\left(\ln{\left(\frac{K}{S_{t_0}}\right)}\right),
\end{equation}
where
\begin{equation}
\psi_{t_0}(z)=\int_{-\infty}^z dx\  e^x \int_{-\infty}^x
m(t_0,du),\qquad{\rm for}\quad z<0
\end{equation}
denotes the exponential double tail of $m(0,.)$.
\end{remark}



\subsection{At-the-money call options}\label{chp4.section.1.dim.case.atm}

When $S_{t_0}= K$, Theorem \ref{chp4.th.lim.call.option : OTM} does
not apply. Indeed, as already noted in the case of L\'evy processes
by   Tankov \cite{tankov11} and Figueroa-Lopez and Forde
\cite{lopez12}, the short maturity behavior of at-the-money options
depends on whether a continuous martingale component  is present
and, in absence of such a component,  on the degree of activity of
small jumps, measured by the Blumenthal-Getoor index of the L\'evy
measure which measures its singularity at zero \cite{jacod07}. We
will show here that similar results hold in the semimartingale case.
We distinguish three cases:
\begin{enumerate}
\item $S$ is a pure jump process of finite variation: in this case at-the-money call options behave linearly in $t-t_0$ (Proposition \ref{chp4.prop.atm.pure.jump}).
\item $S$ is a pure jump process  of infinite variation and its small jumps resemble those of an $\alpha$-stable process: in this case at-the-money call options have an asymptotic behavior of order $|t-t_0|^{1/\alpha}$ when $t-t_0\to 0^+$ (Proposition \ref{chp4.prop.atm.pure.jump.infini}).
\item $S$ has a  continuous martingale component which is non-degenerate in the neighborhood of $t_0$: in this case  at-the-money call options are of order  $\sqrt{t-t_0}$  as $t\to t_0^+$, whether or not jumps are present (Theorem \ref{chp4.th.lim.call.option : ATM}).
\end{enumerate}
These statements are made precise in the sequel. We observe that,
contrarily to the case of out-of-the money options where the
presence of jumps dominates the asymptotic behavior, for
at-the-money options the presence or absence of a continuous
martingale (Brownian) component dominates the asymptotic behavior.


For the finite variation case, we use a slightly modified version of
Assumption \ref{chp4.A.exp}:
\begin{assumption}[Weak right-continuity of jump compensator]\label{chp4.A.exp.fv}
For all
$\varphi\in\mathcal{C}_0^b(\mathbb{R}^+\times\mathbb{R},\mathbb{R})$,
\begin{equation*}
\lim_{t\to t_0,\, t>t_0} \mathbb{E}\left[\int_{\mathbb{R}}
\left(e^{2y}\wedge|y|\right)\,\varphi(S_{t},y)\,m(t,dy)|\mathcal{F}_{t_0}\right]=
\int_{\mathbb{R}}
\left(e^{2y}\wedge|y|\right)\,\varphi(S_{t_0},y)\,m(t_0,dy).
\end{equation*}
\end{assumption}
\begin{proposition}[Asymptotic for ATM call options  for pure jump processes of finite variation]\label{chp4.prop.atm.pure.jump}
Consider the process
\begin{equation}\label{sto.model.finite.variation}
S_t = S_0 + \int_0^t \,r(s)S_{s-}\,ds+
\int_0^t\int_{-\infty}^{+\infty} S_{s^-}(e^y - 1) \tilde{M}(ds\ dy).
\end{equation}
Under the Assumptions \ref{chp4.A.exp.fv} and \ref{chp4.I} and the
condition,
\begin{equation}\label{chp4.varation.finie}
\forall t\in[t_0,T],\quad\int_{\mathbb{R}} |y|\,m(t,dy)<\infty,
\end{equation}
\begin{equation}
\frac{1}{t-t_0}\,C_{t_0}(t,S_{t_0})\underset{t\to
t_0^+}{\longrightarrow} \quad S_{t_0}\,\int_{\mathbb{R}}
(e^y-1)^+\,m(t_0,dy).
\end{equation}
\end{proposition}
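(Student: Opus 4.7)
The plan is to exploit the pure-jump finite-variation structure of $S$ via a Tanaka--Meyer decomposition of $(S_t-K)^+$, rather than approximating by $\mathcal{C}^2_b$ test functions as in the proof of Theorem \ref{chp4.th.lim.call.option : OTM}. Conditioning on $\mathcal{F}_{t_0}$, one may take $t_0=0$ and $S_0=K$. Under \eqref{chp4.varation.finie} the compensated jump integral in \eqref{sto.model.finite.variation} may be re-expressed without compensator,
$$ S_t = S_0 + \int_0^t \tilde\beta_s\,ds + \int_0^t\!\!\int S_{s-}(e^y-1)\,M(ds\,dy), \qquad \tilde\beta_s := r(s)S_{s-} - S_{s-}\!\int(e^y-1)\,m(s,dy). $$
Since $[S,S]^c=0$, the Tanaka--Meyer formula collapses (no local time) to
$$ (S_t - K)^+ = \int_0^t \mathbf{1}_{S_{s-} > K}\,\tilde\beta_s\,ds + \sum_{s \leq t}\bigl[(S_s - K)^+ - (S_{s-} - K)^+\bigr]. $$

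Taking expectations and compensating the jump sum (its integrand is dominated by $S_{s-}|e^y-1|$, which is integrable against $m(s,dy)\,ds\,d\mathbb{P}$ thanks to \eqref{chp4.varation.finie}, Assumption \ref{chp4.I} and Cauchy--Schwarz on $\{|y|>1\}$), one obtains
$$ \mathbb{E}[(S_t - K)^+] = \int_0^t \mathbb{E}\bigl[\mathbf{1}_{S_{s-} > K}\,\tilde\beta_s\bigr]\,ds + \int_0^t \mathbb{E}\!\!\int\!\bigl[(S_{s-}e^y - K)^+ - (S_{s-} - K)^+\bigr]\,m(s, dy)\,ds. $$
Dividing by $t$ and invoking Lemma \ref{lemme.cad}, the limit reduces to the right-continuous value of each integrand at $s=0$. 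For the drift part, c\`adl\`ag regularity gives $S_{s-}\to K$ almost surely, so $\mathbf{1}_{S_{s-}>K}\to 0$ a.s.; together with uniform integrability of $\tilde\beta$ near $0$ this yields $\mathbb{E}[\mathbf{1}_{S_{s-}>K}\tilde\beta_s]\to 0$. The jump integrand at $s=0$ equals $\int[(S_0 e^y-K)^+-(S_0-K)^+]\,m(0,dy) = S_{t_0}\int(e^y-1)^+\,m(t_0,dy)$, which is the asserted limit; the factor $e^{-\int_0^t r(s)\,ds}\to 1$ is harmless.

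The main obstacle is establishing right-continuity at $s=0$ of the function $s\mapsto \mathbb{E}\!\int[(S_{s-}e^y-K)^+-(S_{s-}-K)^+]\,m(s,dy)$ so that Lemma \ref{lemme.cad} applies. Its integrand has modulus bounded by $S_{s-}|e^y-1|$, which is \emph{not} pointwise dominated by a multiple of $e^{2y}\wedge|y|$ for large $y>0$, so Assumption \ref{chp4.A.exp.fv} cannot be invoked with a single test function. The natural remedy is a truncation: for fixed $R>0$, on $\{|y|\leq R\}$ apply Assumption \ref{chp4.A.exp.fv} with a bounded continuous compactly supported test function $\varphi_R(x,y) = [(xe^y-K)^+-(x-K)^+]/(e^{2y}\wedge|y|)$ together with $S_{s-}\to S_0$ a.s., to get convergence of the truncated integral; on $\{|y|>R\}$ control the tail uniformly in $s$ near $0$ using Cauchy--Schwarz against the exponential integrability of Assumption \ref{chp4.I}; then let $R\to\infty$.
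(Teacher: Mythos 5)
Your proof follows essentially the same route as the paper's: Tanaka--Meyer applied to $(S_t-S_0)^+$ with vanishing local time for the finite-variation pure-jump process, taking expectations and compensating the jump sum, then dividing by $t$ and invoking Lemma \ref{lemme.cad} together with right-continuity at $0$ of the drift and jump integrands (the drift contribution vanishing because $\mathbb{P}(S_{s-}>S_0)\to 0$ and $\mathbb{E}[|S_s-S_0|]\to 0$). Your truncation-plus-Cauchy--Schwarz handling of the large-$y$ tail of the jump integrand, where $|e^y-1|$ is not dominated by $e^{2y}\wedge|y|$, is in fact somewhat more careful than the paper's one-line appeal to Assumption \ref{chp4.A.exp.fv}, and is a welcome refinement rather than a deviation.
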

\begin{proof}
Replacing $\mathbb{P}$ by the conditional probability
$\mathbb{P}_{\mathcal{F}_{t_0}}$, we  may set $t_0=0$ in the sequel
and consider the case where ${\mathcal{F}_0}$ is the
$\sigma$-algebra generated by all $\mathbb{P}$-null sets. The
Tanaka-Meyer formula  applied to $({S}_t-S_0)^+$ gives
\begin{eqnarray*}
 (S_{t}-S_0)^+&=&\int_0^t ds\, 1_{ \{S_{s-}> S_0\}} S_{s-}\,\left(r(s)-\int_{\mathbb{R}} (e^y - 1)\,m(s,dy)\right) \\[0.1cm]
&+&\sum_{0< s\leq t} (S_{s}-S_0)^+-(S_{s-}-S_0)^+.
\end{eqnarray*}
Hence, applying Fubini's theorem,
\begin{eqnarray*}
 \mathbb{E}\left[(S_{t}-S_0)^+\right]&=& \mathbb{E}\left[\int_0^t ds\,1_{ \{S_{s-}> S_0\}} S_{s-}\,\left(r(s)-\int_{\mathbb{R}} (e^y - 1) m(s,dy)\right)\right]\\[0.1cm]
&+& \mathbb{E}\left[ \int_0^t\int_{\mathbb{R}} \left[(S_{s-}e^y-S_0)^+-(S_{s-}-S_0)^+\right]\,m(s,dy)\,ds\right]\\[0.1cm]
&=&\int_0^tds\, \mathbb{E}\left[1_{ \{S_{s}> S_0\}} S_{s}\,\left(r(s)-\int_{\mathbb{R}} (e^y - 1) m(s,dy)\right)\right]\\[0.1cm]
&+&  \int_0^tds\,\mathbb{E}\left[\int_{\mathbb{R}}
\left[(S_{s}e^y-S_0)^+-(S_{s}-S_0)^+\right]\,m(s,dy)\right].
\end{eqnarray*}
Since $\hat{S}$ is a martingale,
\begin{equation}
 \mathbb{E}\left[S_t\right]=e^{\int_0^t r(s)\,ds}\,S_0<\infty.
\end{equation}
Hence $t\to\mathbb{E}\left[S_t\right]$ is right-continuous at $0$:
\begin{equation}
\lim_{t\to 0^+} \mathbb{E}\left[S_t\right]=S_0.
\end{equation}
Furthermore, under the Assumptions \ref{chp4.A} and \ref{chp4.H} for
$X_t=\log(S_t)$ (see equation (\ref{chp4.stoch.log})), one may apply
Theorem \ref{chp4.th.devpt.asympt} to the function
$$f:x\in\mathbb{R}\mapsto (\exp(x)-S_0)^2 ,$$
yielding
\begin{equation*}
 \lim_{t\to 0^+}\frac{1}{t}\,\mathbb{E}\left[(S_t-S_0)^2\right]= \mathcal{L}_{0}f(X_{0}),
\end{equation*}
where $\mathcal{L}_0$ is defined via equation (\ref{chp4.L1.eq}).
Since $\mathcal{L}_0 f(X_0)<\infty$, then in particular,
\begin{equation*}
t\mapsto\mathbb{E}\left[(S_t-S_0)^2\right]
\end{equation*}
is right-continuous at $0$ with right limit $0$. Let us show that
$$
t\in[0,T[\mapsto \mathbb{E}\left[ S_{t}\,1_{ \{S_{t}>
S_0\}}\,\left(r(t)-\int_{\mathbb{R}} (e^y - 1) m(t,dy)\right)\right]
$$
is right-continuous at $0$ with right limit $0$. Then applying Lemma
\ref{lemme.cad} yields
\begin{equation*}
\lim_{t\to 0^+} \frac{1}{t}\,\mathbb{E}\left[\int_0^{t} ds\,
S_{s}\,1_{ \{S_{s}> S_0\}}\,\left(r(s)-\int_{\mathbb{R}} (e^y - 1)
m(s,dy)\right)\right]=0.
\end{equation*}
Observing that
$$
S_t \,1_{\{ S_{t}> S_{0}\}} = (S_t-S_0)^+ + S_0\,1_{\{S_t>S_0\}},
$$
we write
\begin{eqnarray*}
&&\left|\mathbb{E}\left[ S_{t}\,1_{ \{S_{t}> S_0\}}\,\left(r(t)-\int_{\mathbb{R}} (e^y - 1) m(t,dy)\right)\right]\right|\\[0.1cm]
&=&\left|\mathbb{E}\left[ \left((S_t-S_0)^+ + S_0\,1_{\{S_t>S_0\}} \right)\,\left(r(t)-\int_{\mathbb{R}} (e^y - 1) m(t,dy)\right)\right]\right|\\[0.15cm]
&\leq& \|r\|_{\infty}\,\mathbb{E}\left[\left|S_t-S_0\right|\right]+ \|r\|_{\infty}\,\mathbb{P}\left( S_t>S_0\right)\\[0.1cm]
&+&\mathbb{E}\left[\left(S_t-S_0\right)^2\right]^{1/2}\,\mathbb{E}\left[\int_{\mathbb{R}} \left(e^y - 1\right)^2\, m(t,dy)\right]^{1/2}\\[0.1cm]
&+&S_0^2\,\mathbb{P}\left[S_t>S_0\right]^{1/2}\,\mathbb{E}\left[\int_{\mathbb{R}} \left(e^y - 1\right)^2\, m(t,dy)\right]^{1/2},\\[0.1cm]
\end{eqnarray*}
using the Lipschitz continuity of $x\mapsto (x-S_0)_+$ and the
Cauchy-Schwarz inequality. Since $S$ is c\`{a}dl\`{a}g,
$$
 \lim_{t\downarrow 0} \mathbb{P}\left(S_t>S_0\right)=0,
$$
and Assumption \ref{chp4.A.exp.fv} implies that
\begin{equation*}
 \lim_{t\downarrow 0} \mathbb{E}\left[\int_{\mathbb{R}} \left(e^y - 1\right)^2\, m(t,dy)\right]= \int_{\mathbb{R}} \left(e^y - 1\right)^2\, m(0,dy)<\infty.
\end{equation*}
Letting $t\to 0^+$ in the above inequalities yields the result.

Let us now focus on the jump term and show that
\begin{equation*}
 t\in[0,T[ \mapsto \mathbb{E}\left[\int_{\mathbb{R}} \left[(S_{t}e^y-S_0)^+-(S_{t}-S_0)^+\right]\,m(t,dy)\right],
\end{equation*}
is right-continuous at $0$ with right-limit
\begin{equation*}
S_0\,\int_{\mathbb{R}} (e^y-1)^+\,m(0,dy).
\end{equation*}
One shall simply observes that
\begin{equation*}
\left|(xe^y-S_0)^+-(x-S_0)^+-(S_{0}e^y-S_0)^+\right|\leq
(x+S_0)\,|e^y-1|,
\end{equation*}
using the Lipschitz continuity of $x\mapsto (x-S_0)_+$ and apply
Assumption \ref{chp4.A.exp.fv}. This ends the proof.
\end{proof}

\begin{proposition}[Asymptotics of ATM  call options  for pure-jump martingales of infinite variation]\label{chp4.prop.atm.pure.jump.infini}
Consider a semimartingale whose continuous martingale part is zero:
\begin{equation}\label{chp4.purejump}
S_t = S_0 + \int_0^t \,r(s)S_{s-}\,ds+
\int_0^t\int_{-\infty}^{+\infty} S_{s^-}(e^y - 1) \tilde{M}(ds\ dy).
\end{equation}
Under the Assumptions \ref{chp4.A.exp} and \ref{chp4.I}, if there
exists $\alpha\in]1,2[$ and a family $m^\alpha(t,dy)$ of positive
measures such that
\begin{equation}\label{chp4.varation.infinie}
\forall t\in[t_0,T],\quad
m(\omega,t,dy)=m^\alpha(\omega,t,dy)+1_{|y|\leq
1}\frac{c(y)}{|y|^{1+\alpha}}\,dy\:\mathrm{a.s.},
\end{equation}
where $c(.)>0$ is continuous at $0$ and
\begin{equation}
\forall t\in[t_0,T]\quad\int_{\mathbb{R}}
|y|\,m^{\alpha}(t,dy)<\infty,
\end{equation}
then
\begin{equation}
\frac{1}{\left(t-t_0\right)^{1/\alpha}}\,C_{t_0}(t,S_{t_0})\underset{t\to
t_0^+}{\longrightarrow} \quad
S_{t_0}\,\frac{1}{2\pi}\int_{-\infty}^\infty \frac{e^{-c(0)\
|z|^\alpha}-1}{z^2}\,dz.
\end{equation}
\end{proposition}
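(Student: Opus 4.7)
The plan is to combine a freezing-then-scaling argument for the stable-like small jumps with the earlier analysis of finite-variation jumps. As in the proof of Theorem \ref{chp4.th.lim.call.option : OTM} we condition on $\mathcal{F}_{t_0}$ and set $t_0=0$, so that $S_0$, $c(\cdot)$ and $m^\alpha(0,\cdot)$ may be treated as deterministic. Writing $X_t=\log(S_t/S_0)$ via \eqref{chp4.stoch.log} and splitting the jump compensator as $m(s,dy)=m^\alpha(s,dy)+\mathbf{1}_{|y|\le 1}c(y)|y|^{-(1+\alpha)}dy$, we decompose $X=X^{r}+X^{\mathrm{st}}$, where $X^{r}$ carries the drift, the large jumps and the compensated $m^\alpha$-jumps, while $X^{\mathrm{st}}$ is the compensated purely-discontinuous martingale of the small stable-like jumps alone.

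The first step is to show that $X^{r}$ contributes only $O(t)$ to $C_0(t,S_0)$, hence $o(t^{1/\alpha})$ since $\alpha>1$. By Lipschitz continuity of $(\cdot)^+$,
\begin{equation*}
\bigl|\mathbb{E}[(S_0e^{X_t}-S_0)^+]-\mathbb{E}[(S_0e^{X^{\mathrm{st}}_t}-S_0)^+]\bigr|\le S_0\,\mathbb{E}\bigl[\bigl|e^{X_t}-e^{X^{\mathrm{st}}_t}\bigr|\bigr],
\end{equation*}
and the right-hand side is $O(t)$ by Assumption \ref{chp4.I}, boundedness of $r$ and the finite first moment of $m^\alpha$, exactly along the lines of Proposition \ref{chp4.prop.atm.pure.jump}. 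So the $t^{1/\alpha}$-asymptotic of $C_0(t,S_0)$ coincides with that of $S_0\,\mathbb{E}[(e^{X^{\mathrm{st}}_t}-1)^+]$.

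The second step identifies the $\alpha$-stable limit of $t^{-1/\alpha}X^{\mathrm{st}}_t$. Let $L$ be the symmetric $\alpha$-stable L\'evy process with L\'evy measure $c(0)|y|^{-(1+\alpha)}dy$; then $L_1$ has characteristic function $\exp(-c(0)K_\alpha|z|^\alpha)$ with $K_\alpha=2\int_0^\infty(1-\cos u)u^{-(1+\alpha)}du$. From the L\'evy--It\^o description of $X^{\mathrm{st}}$, the continuity of $c$ at $0$ and Assumption \ref{chp4.A.exp}, one obtains the pointwise convergence of the L\'evy--Khintchine exponent,
\begin{equation*}
\log\mathbb{E}[e^{izt^{-1/\alpha}X^{\mathrm{st}}_t}]\xrightarrow[t\to 0^+]{}-c(0)K_\alpha|z|^\alpha,
\end{equation*}
so $t^{-1/\alpha}X^{\mathrm{st}}_t\Rightarrow L_1$. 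A second-moment bound $\mathbb{E}[(X^{\mathrm{st}}_t)^2]=O(t^{2/\alpha})$ (available since $2/\alpha<2$) provides the uniform integrability which, combined with $(e^x-1)^+=x^++O(x^2)$, yields
\begin{equation*}
t^{-1/\alpha}\mathbb{E}[(e^{X^{\mathrm{st}}_t}-1)^+]\longrightarrow\mathbb{E}[L_1^+]=\tfrac{1}{2}\mathbb{E}[|L_1|].
\end{equation*}
The closed-form expression then follows from the Fourier identity $|y|=\pi^{-1}\int_{\mathbb{R}}(1-\cos(zy))z^{-2}dz$ and Fubini, after absorbing $K_\alpha$ into a rescaling of the dummy variable $z$.

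The main obstacle is the last step: passing the $t^{-1/\alpha}$ scaling inside a linearly-growing functional. Two points require care: (a) the spatial variation of $c(y)$ near zero, handled by splitting the jump range into $|y|\le\varepsilon$ (where $c(y)\approx c(0)$, producing the limiting stable characteristic exponent) and $|y|>\varepsilon$ (whose contribution is absorbed into the $O(t)$ error through the $L^2$ bound for $X^{\mathrm{st}}$); and (b) the time and $\omega$ dependence of $c$ and of $m^\alpha$, which is exactly what Assumption \ref{chp4.A.exp} is designed to control. The restriction $\alpha\in(1,2)$ is used critically at both ends: $1/\alpha<1$ ensures that the $O(t)$ residual terms are subleading, while $2/\alpha<2$ makes the $L^2$ estimate compatible with the stable scaling.
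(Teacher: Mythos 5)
Your overall strategy coincides with the paper's: isolate the stable-like small jumps, show that the drift, the large jumps and the $m^\alpha$-jumps contribute only $O(t)=o(t^{1/\alpha})$, and identify the $t^{1/\alpha}$-scale limit of what remains. The paper, however, obtains that limit by constructing a \emph{multiplicative} decomposition $S=YZ$ with $Y$ an exponential L\'evy martingale \emph{independent} of $Z$ and then citing \cite[Proposition 5, Proof 2]{tankov11}; you attempt to prove the stable limit from scratch, and this is where the argument breaks. The claimed bound $\mathbb{E}[(X^{\mathrm{st}}_t)^2]=O(t^{2/\alpha})$ is false: the predictable quadratic variation of the compensated small-jump martingale is $\int_0^t\int_{|y|\le 1}y^2\,c(y)|y|^{-1-\alpha}dy\,ds\asymp t$, and since $t\gg t^{2/\alpha}$ for $\alpha<2$, the second moment of $t^{-1/\alpha}X^{\mathrm{st}}_t$ is of order $t^{1-2/\alpha}\to\infty$. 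This is not a removable blemish: the limit law is $\alpha$-stable and has infinite variance, so \emph{no} uniform $L^2$ bound can hold, and your uniform-integrability step --- the passage from $t^{-1/\alpha}X^{\mathrm{st}}_t\Rightarrow L_1$ to convergence of the expectations of the linearly growing functional $x\mapsto x^+$, which is the whole difficulty of the proposition --- is unsupported. One needs instead a uniform bound on $\mathbb{E}\left[|t^{-1/\alpha}X^{\mathrm{st}}_t|^p\right]$ for some $p\in(1,\alpha)$; supplying that estimate is precisely the content of Tankov's result, which the paper invokes in order not to redo it.

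Two further points. First, your error estimate $\mathbb{E}[|e^{X_t}-e^{X^{\mathrm{st}}_t}|]=\mathbb{E}[e^{X^{\mathrm{st}}_t}|e^{X^{r}_t}-1|]=O(t)$ silently requires $X^{\mathrm{st}}$ and $X^{r}$ to be independent: without independence, Cauchy--Schwarz only yields $O(\sqrt{t})$ for this product, and $\sqrt{t}$ is \emph{not} $o(t^{1/\alpha})$ when $\alpha<2$. So you still need the paper's construction of the stable part as an independent L\'evy factor on a (possibly enlarged) probability space; an additive splitting of the compensator does not by itself yield a splitting of the jumps of $M$ into two independent pieces, nor does it make $X^{\mathrm{st}}$ a well-defined functional of $S$. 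Second, the normalisation of your constant is off: with $K_\alpha=2\int_0^\infty(1-\cos u)\,u^{-1-\alpha}du$, the substitution $z\mapsto K_\alpha^{-1/\alpha}z$ does not ``absorb'' $K_\alpha$ but produces a prefactor $K_\alpha^{1/\alpha}$, so your computation ends at $\frac{K_\alpha^{1/\alpha}}{2\pi}\int(1-e^{-c(0)|z|^\alpha})z^{-2}dz$ rather than the stated expression; the constant needs to be tracked carefully against Tankov's statement.
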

\begin{proof}
Without loss of generality, we set $t_0=0$ in the sequel and
consider the case where ${\mathcal{F}_0}$ is the $\sigma$-algebra
generated by all $\mathbb{P}$-null sets. The at-the-money call price
can be expressed as
\begin{equation}
C_0(t,S_0)=\mathbb{E}\left[(S_t-S_0)^+\right]=S_0\mathbb{E}\left[\left(\frac{S_t}{S_0}-1\right)^+\right].
\end{equation}
Define, for $f\in C^2_b(]0,\infty[,\mathbb{R})$
\begin{equation}
  \begin{split}
    \mathcal{L}_0f(x)&=r(0)xf'(x)+\int_{\mathbb{R}}[f(xe^y)-f(x)-x(e^y-1).f'(x)]m(0,dy).
  \end{split}
\end{equation}
We decompose $\mathcal{L}_0$ as the sum
$\mathcal{L}_0=\mathcal{K}_0+\mathcal{J}_0$ where
\begin{equation*}
  \begin{split}
    \mathcal{K}_0f(x)&=r(0)xf'(x)+\int_{\mathbb{R}}[f(xe^y)-f(x)-x(e^y-1).f'(x)]\,m^{\alpha}(0,dy),\\[0.1cm]
    \mathcal{J}_0f(x)&=\int_{-1}^1 [f(xe^y)-f(x)-x(e^y-1).f'(x)]\,\frac{c(y)}{|y|^{1+\alpha}}\,dy.
  \end{split}
\end{equation*}
The term $\mathcal{K}_0$ may  be  be interpreted in terms of Theorem
\ref{chp4.th.devpt.asympt}: if $(Z_t)_{[0,T]}$ is a {\it finite
variation}  semimartingale of the form \eqref{chp4.purejump}
starting from $Z_0=S_0$ with jump compensator
  $m^{\alpha}(t,dy) $, then by Theorem \ref{chp4.th.devpt.asympt},
\begin{equation}
\forall f\in C^2_b(]0,\infty[,\mathbb{R}), \qquad\lim_{t\to
0^+}\frac{1}{t}\,e^{-\int_{0}^t r(s)\,ds}\, \mathbb{E}\left[ f(Z_t)
\right] = \mathcal{K}_0f(S_0).
\end{equation}
The idea is now to interpret
$\mathcal{L}_0=\mathcal{K}_0+\mathcal{J}_0$ in terms of a {\it
multiplicative decomposition}  $S_t=Y_t Z_t$ where $Y={\mathcal
E}(L)$ is the  stochastic exponential  of a pure-jump L\'evy process
with L\'evy measure $c(y)/|y|^{1+\alpha}\,dy$, which we can take
independent from $Z$. Indeed, let $Y={\mathcal E}(L)$ where $L$ is a
pure-jump L\'evy martingale with L\'evy measure $1_{|y|\leq 1}\
c(y)/|y|^{1+\alpha}\  dy$,  independent from $Z$, with infinitesimal
generator $\mathcal{J}_0$. Then $Y$ is a martingale  and $[Y,Z]=0$.
Then $S=YZ$ and $Y$ is an exponential L\'evy martingale, independent
from $Z$, with $E[Y_t]=1$.

A result of Tankov \cite[Proposition 5, Proof 2]{tankov11} for
exponential L\'evy processes  then implies that
\begin{equation}
\frac{1}{t^{1/\alpha}}\,\,\mathbb{E}\left[(Y_t-1)^+\right]\mathop{\to}^{t\to
0^+}\frac{1}{2\pi}\int_{-\infty}^\infty \frac{e^{-c(0)
|z|^\alpha}-1}{z^2}\,dz.\label{eq.tankov}
\end{equation}
We will show that the term \eqref{eq.tankov} is the dominant term
which gives the asymptotic behavior of $C_0(T,S_{0})$.

Indeed, by the Lipschitz continuity of $x\mapsto (x-S_0)_+$,
$$ | (S_t-S_0)_+-S_0(Y_t-1)_+|\leq Y_t|Z_t-S_0|,$$
so, taking expectations and using that $Y$ is independent from $Z$,
we get
$$ \underbrace{\mathbb{E}[ e^{-\int_{0}^t r(s)\,ds}| (S_t-S_0)_+}_{C_0(t,S_0)}-S_0(Y_t-1)_+|]\leq \underbrace{\mathbb{E}(Y_t)}_{=1} \mathbb{E}[ e^{-\int_{0}^t r(s)\,ds}|Z_t-S_0|].$$
To estimate the right hand side of this inequality note that
$|Z_t-S_0|=(Z_t-S_0)_+ +(S_0-Z_t)_+$. Since $Z$ has finite
variation, from Proposition \ref{chp4.prop.atm.pure.jump}
$$E[ e^{-\int_{0}^t r(s)\,ds}(Z_t-S_0)_+]\mathop{\sim}^{t\to 0^+} t S_{0}\int_{0}^{\infty} dx\ e^x  m([x,+\infty[).$$
Using  the martingale property of $ e^{-\int_{0}^t r(s)\,ds}Z_t$)
yields
$$E[ e^{-\int_{0}^t r(s)\,ds}(S_0-Z_t)_+]\mathop{\sim}^{t\to 0^+} t S_{0}\int_{0}^{\infty} dx\ e^x  m([x,+\infty[).$$
Hence, dividing by $t^{1/\alpha}$ and taking $t\to 0^+$ we obtain
\begin{equation*}
\frac{1}{t^{1/\alpha}}\,e^{-\int_{0}^t
r(s)\,ds}\,\mathbb{E}\left[|Z_t-S_0|^+\right] \mathop{\to}^{t\to
0^+} 0.
\end{equation*}
Thus, dividing by $t^{1/\alpha}$ the above inequality and using
\eqref{eq.tankov} yields
\begin{eqnarray*}
\frac{1}{t^{1/\alpha}}\,e^{-\int_{0}^t r(s)\,ds}\,
\mathbb{E}\left[(S_t-S_0)_+ \right]\mathop{\to}^{t\to 0^+}
S_0\,\frac{1}{2\pi}\int_{-\infty}^\infty \frac{e^{-c(0)
|z|^\alpha}-1}{z^2}\,dz.
\end{eqnarray*}
\end{proof}
We now focus on a third case, when $S$ is a continuous
semimartingale, i.e. an Ito process. From known results in the
diffusion case \cite{busca02}, we expect in this case a
short-maturity behavior ins $O(\sqrt{t})$.
We propose here  a proof of this behavior in a semimartingale setting using the notion of semimartingale local time.
\begin{proposition}[Asymptotic for at-the-money options  for continuous semimartingales]
 \label{chp4.prop.atm.diffus}
Consider the  process
\begin{equation}\label{diffusion}
S_t = S_0 +\int_0^t r(s) S_{s} ds + \int_0^t S_{s}\delta_s dW_s .
\end{equation}
Under the Assumptions \ref{chp4.A.exp} and   \ref{chp4.I}  and the
following non-degeneracy condition in the neighborhood of $t_0$,
$$\exists \epsilon >0, \qquad \mathbb{P}\left(  \forall t\in[t_0,T],\qquad \delta_t \geq \epsilon\right)=1,$$
we have
\begin{equation}
\frac{1}{\sqrt{t-t_0}}\,C_{t_0}(t,S_{t_0})\underset{t\to t_0^+}
{\longrightarrow} \frac{S_{t_0}}{\sqrt{2\pi}}\,\delta_{t_0}.
\end{equation}
\end{proposition}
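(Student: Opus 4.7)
The plan is to use the Tanaka--Meyer formula to express the at-the-money payoff in terms of the semimartingale local time of $S$ at $S_{t_0}$, and then to derive the short-time asymptotic of this local time via a short-time central limit theorem for the continuous martingale part of $S$.

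First, replacing $\mathbb P$ by $\mathbb P_{|\mathcal F_{t_0}}$ I would set $t_0=0$, as in the proof of Theorem \ref{chp4.th.devpt.asympt}. Writing $S_t = S_0 + A_t + M_t$ with $A_t = \int_0^t r(s)S_s\,ds$ of finite variation and $M_t = \int_0^t S_s\delta_s\,dW_s$ a continuous martingale with $[M,M]_t = \int_0^t S_s^2\delta_s^2\,ds$, applying Tanaka--Meyer to $(S_t-S_0)^+$ yields
$$(S_t-S_0)^+ = \int_0^t \mathbf 1_{\{S_s>S_0\}}\,dS_s + \tfrac12\,L^{S_0}_t(S),$$
where $L^{S_0}_t(S)$ denotes the semimartingale local time. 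Assumption \ref{chp4.I} ensures that the Brownian part of the stochastic integral is a true martingale, so taking expectations and using Fubini:
$$\mathbb E\bigl[(S_t-S_0)^+\bigr] = \int_0^t r(s)\,\mathbb E\bigl[S_s\mathbf 1_{\{S_s>S_0\}}\bigr]\,ds + \tfrac12\,\mathbb E\bigl[L^{S_0}_t(S)\bigr].$$
Since $r$ is bounded and $\mathbb E[S_s] = S_0 e^{\int_0^s r(u)\,du}$ stays bounded near $0$, the drift term is $O(t) = o(\sqrt t)$. Combined with $e^{-\int_0^t r(s)\,ds}\to 1$, the assertion then reduces to proving $\mathbb E[L^{S_0}_t(S)]/\sqrt t \to S_0\delta_0\sqrt{2/\pi}$.

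To analyze the expected local time, I would apply Tanaka to $|S_t-S_0|$ as well, which gives $\mathbb E[L^{S_0}_t(S)] = \mathbb E[|S_t-S_0|] + O(t)$; combining this with the Lipschitz bound $\bigl|\,|S_t-S_0|-|M_t|\,\bigr|\leq |A_t| = O(t)$ reduces the problem to $\mathbb E[|M_t|]/\sqrt t \to S_0\delta_0\sqrt{2/\pi}$. Assumption \ref{chp4.A.exp} gives $\mathbb E[[M,M]_t]/t \to S_0^2\delta_0^2$, so $M_t/\sqrt t$ is $L^2$-bounded. By Dambis--Dubins--Schwarz, $M_t = B_{[M,M]_t}$ for a Brownian motion $B$ (on a possibly enlarged probability space); the non-degeneracy $\delta_s\geq\varepsilon>0$ together with Assumption \ref{chp4.A.exp} yields $[M,M]_t/t \to S_0^2\delta_0^2$ in probability. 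A martingale central limit theorem then gives $M_t/\sqrt t\Rightarrow S_0\delta_0 Z$ with $Z\sim\mathcal N(0,1)$, and uniform integrability (via the $L^2$ bound) upgrades this to $L^1$ convergence, so $\mathbb E[|M_t|]/\sqrt t\to S_0\delta_0\,\mathbb E[|Z|] = S_0\delta_0\sqrt{2/\pi}$.

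The main obstacle is this last step: in the DDS representation, the Brownian motion $B$ and the time-change $[M,M]$ are generally correlated, so Brownian scaling cannot be applied pointwise, and a soft CLT argument is required. The non-degeneracy $\delta_t\geq\varepsilon$ in a neighborhood of $t_0$ is essential here, since it ensures $[M,M]_t\geq \varepsilon^2\int_0^t S_s^2\,ds$ stays of order $t$ almost surely, so that $B_{[M,M]_t}$ genuinely fluctuates on scale $\sqrt t$ with nondegenerate Gaussian limit of variance $S_0^2\delta_0^2$. Without this condition, $M_t/\sqrt t$ could concentrate at zero and the leading order of $C_{t_0}(t,S_{t_0})$ would be slower than $\sqrt{t-t_0}$.
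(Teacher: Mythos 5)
Your proposal is correct in outline but takes a genuinely different route from the paper at the decisive step. Both arguments begin identically: Tanaka--Meyer applied to $(S_t-S_0)^+$, the drift term shown to be $o(\sqrt t)$, so that everything reduces to the short-time behaviour of $\mathbb E[L^{S_0}_t(S)]$. From there the paper stays with the local time: it writes $S_t=S_0\exp\bigl(\int_0^t(r(s)-\tfrac12\delta_s^2)\,ds+B_{[U]_t}\bigr)$ via Dambis--Dubins--Schwarz, uses the occupation-time formula to identify $L^{S_0}_t(S)=S_0\,L^0_{\int_0^t\delta_s^2\,ds}(B)$, and then invokes Brownian scaling together with L\'evy's identity $L_1^0(B)\stackrel{d}{=}|B_1|$ to produce the constant $\sqrt{2/\pi}$. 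You instead apply Tanaka a second time, to $|S_t-S_0|$, which converts the expected local time into $\mathbb E[|S_t-S_0|]+O(t)$ (equivalently, one could just write $(x)^+=\tfrac12(|x|+x)$ and use the martingale property), and then compute the first absolute moment of the martingale part via a martingale CLT ($[M,M]_t/t\to S_0^2\delta_0^2$ in probability, hence $M_t/\sqrt t\Rightarrow S_0\delta_0 Z$) upgraded to convergence of expectations by uniform integrability. Your route buys robustness at exactly the point you flag: the paper's scaling identity $\mathbb E[L^0_{\int_0^t\delta_s^2\,ds}(B)]=\mathbb E[\sqrt{\int_0^t\delta_s^2\,ds}\,L_1^0(B)]$ is applied with a random time change that is correlated with $B$, a subtlety the weak-convergence argument sidesteps entirely. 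What the paper's route buys in exchange is that the quantity it must control, $\tfrac1t\int_0^t\delta_s^2\,ds$, involves only $\delta$ and is handled directly by Assumption \ref{chp4.A.exp} and Lemma \ref{lemme.cad}, whereas your CLT/UI step needs $\tfrac1t\mathbb E[[M,M]_t]=\tfrac1t\int_0^t\mathbb E[S_s^2\delta_s^2]\,ds$ to remain bounded --- a joint moment condition on $S$ and $\delta$ that deserves an explicit line of justification, though the paper itself makes a comparable implicit use of $\mathbb E[(S_t-S_0)^2]$, so this is not a gap by the paper's own standard.
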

\begin{proof}
Set $t_0=0$  and consider, without loss of generality, the case
where ${\mathcal{F}_0}$ is the $\sigma$-algebra generated by all
$\mathbb{P}$-null sets. Applying the Tanaka-Meyer formula  to
$(S_t-S_0)^+$, we have
\begin{equation*}
  (S_{t}-S_0)^+= \int_0^{t} 1_{ \{S_{s}> S_0\}} dS_s+ \frac{1}{2}L^{S_0}_{t}(S).
\end{equation*}
where $L^{S_0}_t(S)$ corresponds to the semimartingale local time of
$S_t$ at level $S_0$ under $\mathbb{P}$. As noted in Section
\ref{chp4.section.1.dim.case.otm}, Assumption \ref{chp4.I} implies
that the discounted  price $\hat{S}_t=e^{-\int_0^t r(s)\,ds}S_t$ is
a $\mathbb{P}$-martingale. So
$$dS_t= e^{\int_0^t r(s)\,ds}\,\left(r(t)S_{t} dt+ d\hat{S}_t\right), \quad {\rm and}$$
$$
  \int_0^{t} 1_{ \{S_{s}> S_0\}} dS_s = \int_0^{t} e^{\int_0^s r(u)\,du}\,1_{ \{S_{s}> S_0\}} d\hat{S}_s+
  \int_0^{t} e^{\int_0^s r(u)\,du}\,r(s) S_{s} 1_{ \{S_{s}> S_0\}} ds,
$$
where the first term is a martingale. Taking expectations, we get:
 \begin{equation*}
 C(t,S_0)=\mathbb{E}\left[ e^{-\int_0^{t}r(s)\,ds}\int_0^{t} e^{\int_0^s r(u)\,du}\,\,r(s) S_s \,1_{\{ S_{s}> S_0\}} ds + \frac{1}{2} e^{-\int_0^{t}r(s)\,ds}\,L^{S_0}_{t}(S)\right] .
  \end{equation*}
Since $\hat{S}$ is a martingale,
\begin{equation}
\forall t\in[0,T]\quad \mathbb{E}\left[S_t\right]=e^{\int_0^t
r(s)\,ds}\,S_0<\infty.
\end{equation}
Hence $t\to\mathbb{E}\left[S_t\right]$ is right-continuous at $0$:
\begin{equation}
\lim_{t\to 0^+} \mathbb{E}\left[S_t\right]=S_0.
\end{equation}
Furthermore, under the Assumptions \ref{chp4.A} and \ref{chp4.H} for
$X_t=\log(S_t)$ (see equation (\ref{chp4.stoch.log})), one may apply
Theorem \ref{chp4.th.devpt.asympt} to the function
$$f:x\in\mathbb{R}\mapsto (\exp(x)-S_0)^2 ,$$
yielding
\begin{equation*}
 \lim_{t\to 0^+}\frac{1}{t}\,\mathbb{E}\left[(S_t-S_0)^2\right]= \mathcal{L}_{0}f(X_{0}),
\end{equation*}
where $\mathcal{L}_0$ is defined via equation (\ref{chp4.L1.eq})
with $m\equiv 0$. Since $\mathcal{L}_0 f(X_0)<\infty$, then in
particular,
\begin{equation*}
t\mapsto\mathbb{E}\left[(S_t-S_0)^2\right]
\end{equation*}
is right-continuous at $0$ with right limit $0$. Let us show that
$$
t\in[0,T[\mapsto \mathbb{E}\left[ S_{t}\,1_{ \{S_{t}> S_0\}}\right]
$$
is right-continuous at $0$ with right limit $0$. Then applying Lemma
\ref{lemme.cad} yields
\begin{equation*}
\lim_{t\to 0^+} \frac{1}{t}\,\mathbb{E}\left[\int_0^{t} ds\,
S_{s}\,1_{ \{S_{s}> S_0\}}\right]=0
\end{equation*}
Observing that
$$
S_t \,1_{\{ S_{t}> S_{0}\}} = (S_t-S_0)^+ + S_0\,1_{\{S_t>S_0\}},
$$
we write
\begin{eqnarray*}
\left|\mathbb{E}\left[ S_{t}\,1_{ \{S_{t}> S_0\}}\right]\right|&=&\left|\mathbb{E}\left[ (S_t-S_0)^+ + S_0\,1_{\{S_t>S_0\}}\right]\right|\\[0.15cm]
&\leq& \mathbb{E}\left[\left|S_t-S_0\right|\right]+ \mathbb{P}\left(
S_t>S_0\right)
\end{eqnarray*}
using the Lipschitz continuity of $x\mapsto (x-S_0)_+$. Since $S$ is
c\`{a}dl\`{a}g,
$$
 \lim_{t\downarrow 0} \mathbb{P}\left(S_t>S_0\right)=0.
$$
Letting $t\to 0$ in the above inequalities yields the result. Since
$$\mathbb{E}\left[\int_0^{t} e^{\int_0^s r(u)\,du}\,r(s) S_s \,1_{\{ S_{s}> S_0\}} ds\right]=o(t),$$
a fortiori,
$$\mathbb{E}\left[\int_0^{t} e^{\int_0^s r(u)\,du}\,r(s) S_s \,1_{\{ S_{s}> S_0\}} ds\right]=o\left(\sqrt{t}\right).
$$
Hence (if the limit exists)
 \begin{equation}\label{call.vs.local.time}
 \lim_{t\to 0}\frac{1}{\sqrt{t}}\,C(t,S_0)=\lim_{t\to 0} \frac{1}{\sqrt{t}}\,e^{-\int_0^{t}r(s)\,ds}\,\mathbb{E}\left[\frac{1}{2}\,L^{S_0}_{t}(S)\right] =\lim_{t\to 0} \frac{1}{\sqrt{t}}\,\mathbb{E}\left[\frac{1}{2}\,L^{S_0}_{t}(S)\right] .
  \end{equation}
By the Dubins-Schwarz theorem  \cite[Theorem 1.5]{revuzyor}, there
exists a Brownian motion $B$ such that $$ \forall
t<[U]_{\infty},\quad U_t=\int_0^t\delta_s\,dW_s =
B_{[U]_t}=B_{\int_0^t \delta^2_s ds}.$$
\begin{eqnarray*}
{\rm So}\quad \forall t<[U]_{\infty} \quad S_t &=&S_0\exp{\left(\int_0^t \left(r(s)-\frac{1}{2}\delta_s^2\right)\,ds+B_{[ U]_t}\right)} \\[0.1cm]
&=&S_0\exp{\left(\int_0^t
\left(r(s)-\frac{1}{2}\delta_s^2\right)\,ds+B_{\int_0^t\delta_s^2\,ds}\right)}
.
\end{eqnarray*}
The occupation time formula 
then yields, for
$\phi\in\mathcal{C}_0^\infty(\mathbb{R},\mathbb{R})$,
\begin{eqnarray*}
\int_0^\infty \phi(K) L_t^K\left(S_0\exp{\left(B_{[U]}\right)}\right) \,dK &=&\int_0^t \phi\left(S_0\exp{\left(B_{[U]_u}\right)}\right)S_0^2\exp{\left(B_{[U]_u}\right)}^2\delta_u^2\,du\\[0.1cm]
&=&\int_{-\infty}^\infty \phi(S_0 \exp(y))S_0^2\exp(y)^2
L_t^y\left(B_{[U]}\right)\,dy,
\end{eqnarray*}
where $L_t^K\left(S_0\exp{\left(B_{[U]}\right)}\right)$ (resp.
$L_t^y\left(B_{[U]}\right)$) denotes the semimartingale local time
of the process $S_0\exp{\left(B_{[U]}\right)}$ at  $K$ and (resp.
$B_{[U]}$ at  $y$). A change of variable leads to
\begin{eqnarray*}
&&\int_{-\infty}^\infty \phi(S_0\exp(y)) S_0\exp(y) L_t^{S_0e^y}\left(S_0\exp{\left(B_{[U]}\right)}\right) \,dy \\[0.1cm]
&=& \int_{-\infty}^\infty \phi(S_0 \exp(y))S_0^2\exp(y)^2
L_t^y\left(B_{[U]}\right).
\end{eqnarray*}
Hence
$$L_t^{S_0}\left(S_0\exp{\left(B_{[U]}\right)}\right)=S_0L_t^0\left(B_{[U]}\right).$$
We also have
$$L_t^0\left(B_{[U]}\right)=L_{\int_0^t\delta_s^2\,ds}^0\left(B\right),$$
where $L_{\int_0^t\delta_s^2\,ds}^0\left(B\right)$ denotes the
semimartingale local time of $B$ at  time $\int_0^t\delta_s^2\,ds$
and level $0$.
Using the scaling property of Brownian motion,
\begin{eqnarray*}
\mathbb{E}\left[L_t^{S_0}\left(S_0\,\exp{\left(B_{[U]}\right)}\right)\right]&=& S_0\,\mathbb{E}\left[L_{\int_0^t\delta_s^2\,ds}^0\left(B\right)\right]\\[0.1cm]
&=& S_0\,\mathbb{E}\left[\sqrt{\int_0^t\delta_s^2\,ds}\,
L_{1}^0\left(B\right)\right].
\end{eqnarray*}
Hence
\begin{eqnarray*}
\lim_{t\to 0^+}
\frac{1}{\sqrt{t}}\,\mathbb{E}\left[L_t^{S_0}\left(S_0\,\exp{\left(B_{[U]}\right)}\right)\right]&=&
\lim_{t\to 0^+} \frac{1}{\sqrt{t}}\,S_0\,\mathbb{E}\left[\sqrt{\int_0^t\delta_s^2\,ds}\, L_{1}^0\left(B\right)\right]\\[0.15cm]
&=&\lim_{t\to 0^+}
\,S_0\,\mathbb{E}\left[\sqrt{\frac{1}{t}\int_0^t\delta_s^2\,ds}\,
L_{1}^0\left(B\right)\right].
\end{eqnarray*}
Let us show that
\begin{equation}
\lim_{t\to 0^+}
\,S_0\,\mathbb{E}\left[\sqrt{\frac{1}{t}\int_0^t\delta_s^2\,ds}\,
L_{1}^0\left(B\right)\right]
=S_0\,\delta_0\,\mathbb{E}\left[L_{1}^0\left(B\right)\right].
\label{cv0.eq}
\end{equation}
Using the Cauchy-Schwarz inequality,
\begin{equation*}
\left|\mathbb{E}\left[\left(\sqrt{\frac{1}{t}\int_0^t\delta_s^2\,ds}-\delta_0\right)\,
L_{1}^0\left(B\right)\right]\right|
\leq
\mathbb{E}\left[L_{1}^0\left(B\right)^2\right]^{1/2}\,\mathbb{E}\left[\left(\sqrt{\frac{1}{t}\int_0^t\delta_s^2\,ds}-\delta_0\right)^2\right]^{1/2}.
\end{equation*}
The Lipschitz property of $x\to\left(\sqrt{x}-\delta_0\right)^2$ on
$[\epsilon,+\infty[$ yields
\begin{eqnarray*}
\mathbb{E}\left[\left(\sqrt{\frac{1}{t}\int_0^t\delta_s^2\,ds}-\delta_0\right)^2\right]
&\leq& c(\epsilon) \mathbb{E}\left[\left|\frac{1}{t}\,\int_0^t \left(\delta^2_s - \delta_0^2\right)\,ds\right|\right]\\[0.15cm]
&\leq&\frac{c(\epsilon)}{t}\,\int_0^t\,ds\,
\mathbb{E}\left[\left|\delta^2_s - \delta_0^2\right|\right].
\end{eqnarray*}
where $c(\epsilon)$ is the Lipschitz constant of
$x\to\left(\sqrt{x}-\delta_0\right)^2$ on $[\epsilon,+\infty[$.
Assumption \ref{chp4.A.exp} and Lemma \ref{lemme.cad} then imply
\eqref{cv0.eq}.
By L\'{e}vy's theorem for the local time of Brownian motion,
$L_1^0(B)$ has the same law as $|B_1|$, leading to
\begin{equation*}
\mathbb{E}\left[L_1^0(B)\right]=\sqrt{\frac{2}{\pi}}.
\end{equation*}
Clearly, since
$L_t^K(S)=L_t^{K}\left(S_0\exp{\left(B_{[U]}\right)}\right)$,
\begin{equation}\label{asympt.local.time.disc}
\lim_{t\to 0} \frac{1}{\sqrt{t}}
\,\mathbb{E}\left[\frac{1}{2}L^{S_0}_{t}(S)\right] =
\frac{S_0}{\sqrt{2\pi}}\,\delta_0.
\end{equation}
This ends the proof.
\end{proof}
We can now treat the case of a general It\^{o} semimartingale with
both a continuous martingale component and a jump component.
\begin{theorem}[Short-maturity asymptotics for at-the-money call options]\label{chp4.th.lim.call.option : ATM}
Consider the price process $S$ whose dynamics is given by
\begin{equation*}
S_t = S_0 +\int_0^t r(s) S_{s^-} ds + \int_0^t S_{s^-}\delta_s dW_s
+ \int_0^t\int_{-\infty}^{+\infty} S_{s^-}(e^y - 1) \tilde{M}(ds\
dy).
\end{equation*}
Under the Assumptions \ref{chp4.A.exp} and \ref{chp4.I} and the
folllowing non-degeneracy condition in the neighborhood of $t_0$
$$ \exists \epsilon >0, \qquad \mathbb{P}\left(\forall t\in[t_0,T], \quad\delta_t\geq \epsilon\right)=1,$$ we have
\begin{equation}
\frac{1}{\sqrt{t-t_0}}\,C_{t_0}(t,S_{t_0})\underset{t\to
t_0^+}{\longrightarrow} \frac{S_{t_0}}{\sqrt{2\pi}}\,\delta_{t_0}.
\end{equation}
\end{theorem}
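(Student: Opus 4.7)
The plan is to apply the Tanaka-Meyer formula to $(S_t - S_0)^+$ as in the proof of Proposition~\ref{chp4.prop.atm.diffus}, now retaining the pathwise jump correction. Setting $t_0 = 0$,
\begin{equation*}
(S_t - S_0)^+ = \int_0^t \mathbf{1}_{\{S_{s^-} > S_0\}}\,dS_s + \tfrac{1}{2}\,L_t^{S_0}(S) + J_t,
\end{equation*}
where $J_t = \sum_{s \le t}\bigl[(S_s - S_0)^+ - (S_{s^-} - S_0)^+ - \mathbf{1}_{\{S_{s^-} > S_0\}}\Delta S_s\bigr]$. Under Assumption~\ref{chp4.I}, the Brownian and compensated-jump components of $\int \mathbf{1}\,dS$ are true martingales, so taking expectations reduces the theorem to three subgoals: (i) the drift term $\mathbb{E}[\int_0^t r(s)S_{s^-}\mathbf{1}_{\{S_{s^-} > S_0\}}\,ds]$ is $o(\sqrt{t})$; (ii) $\mathbb{E}[J_t] = o(\sqrt{t})$; and (iii) $\mathbb{E}[L_t^{S_0}(S)]/(2\sqrt{t}) \to S_0\delta_0/\sqrt{2\pi}$.

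Step (i) is immediate from $\|r\|_\infty < \infty$ and $\mathbb{E}[S_s] = e^{\int_0^s r(u)du}S_0 = O(1)$, giving an $O(t)$ bound. Step (iii) mirrors the Dubins-Schwarz argument of Proposition~\ref{chp4.prop.atm.diffus}: the semimartingale local time $L^{S_0}(S)$ depends only on the continuous martingale part $\int_0^\cdot S_{s^-}\delta_s\,dW_s$ of $S$, whose bracket is $\int_0^\cdot S_s^2 \delta_s^2\,ds$. The non-degeneracy $\delta \geq \epsilon$ makes the time change well defined; combined with the right-continuity of $\delta$ at $0$, Lemma~\ref{lemme.cad}, and L\'evy's identity $\mathbb{E}[L_1^0(B)] = \sqrt{2/\pi}$, this gives the claim exactly as in the continuous case.

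The main obstacle is (ii). Compensating the jumps yields
\begin{equation*}
\mathbb{E}[J_t] = \mathbb{E}\!\left[\int_0^t\!\!\int \Psi(S_{s^-}, y)\,m(s, dy)\,ds\right],\qquad \Psi(a, y) = (ae^y - S_0)^+ - (a - S_0)^+ - \mathbf{1}_{\{a>S_0\}}a(e^y - 1),
\end{equation*}
with $\Psi \geq 0$ by convexity. A direct case analysis shows $\Psi(a, y) = 0$ unless $a$ and $ae^y$ straddle $S_0$, and $\Psi(a, y) = (a|e^y - 1| - |a - S_0|)^+$ in the straddle region, so $\mathbb{E}[\Psi(S_{s^-}, y)] \leq \int_0^{S_0|e^y - 1|}\mathbb{P}(|S_s - S_0| \leq u)\,du$. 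The non-degeneracy $\delta \geq \epsilon$ yields an Aronson-type bound $\mathbb{P}(|S_s - S_0| \leq u) \leq Cu/\sqrt{s}$ for small $u$. Splitting the jumps at the threshold $\eta_s := \sqrt{s}/S_0$ and using the quadratic bound $\Psi \lesssim S_0^2(e^y-1)^2/\sqrt{s}$ for $|y|\leq\eta_s$ and the linear bound $\Psi \leq S_0|e^y-1|$ for $|y|>\eta_s$ leads to
\begin{equation*}
\mathbb{E}[J_t] \lesssim \int_0^t ds\,\Bigl[\tfrac{1}{\sqrt{s}}\!\!\int_{|y|<\eta_s}\!\!(e^y-1)^2 m(s,dy) + \!\!\int_{|y|\geq\eta_s}\!\! |e^y - 1|\,m(s,dy)\Bigr].
\end{equation*}
Assumption~\ref{chp4.A.exp} ensures that the first integrand vanishes uniformly as $s\to 0$ (since $\int(e^y-1)^2 m(0, dy)<\infty$ has no atom at $0$), and Assumption~\ref{chp4.I} controls the second (giving $O(s^{(1-\alpha)/2})$ even in the worst $\alpha$-stable-like case), so both contributions are $o(\sqrt{t})$. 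The delicate point is precisely that the naive uniform estimate $\Psi\leq a|e^y-1|$ only delivers $O(\sqrt{t})$, which matches the local-time order; one must exploit the vanishing of $\int_{|y|<\eta}(e^y-1)^2 m(s,dy)$ as $\eta\to 0$ to gain the extra $o(1)$ factor needed to ensure that the Brownian volatility $\delta_0$ alone determines the leading constant.
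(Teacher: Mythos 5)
Your decomposition and your Steps (i) and (iii) coincide with the paper's proof: Tanaka--Meyer applied to $(S_t-S_0)^+$, the martingale property of the discounted price under Assumption \ref{chp4.I}, the $o(t)$ estimate for the drift term, and the Dubins--Schwarz/scaling computation of $\lim_{t\to 0}\mathbb{E}[L^{S_0}_t(S)]/\sqrt{t}$ imported from Proposition \ref{chp4.prop.atm.diffus}. Where you genuinely diverge is Step (ii). The paper disposes of the compensated jump sum in one line by asserting the pointwise bound $|(ze^x-S_0)^+-(z-S_0)^+-1_{\{z>S_0\}}z(e^x-1)|\le C(S_0e^x-z)^2$ and invoking Assumption \ref{chp4.A.exp} and Lemma \ref{lemme.cad} to get $O(t)$. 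You instead use the exact structure of the integrand (it equals $(z|e^x-1|-|z-S_0|)^+$ on the straddle region and vanishes elsewhere), a small-ball estimate for $S_s$ near $S_0$, and a splitting of jump sizes at $\sqrt{s}$. Your route is heavier, but your diagnosis of where the difficulty sits is the right one: the paper's quadratic bound degenerates exactly at the strike (take $z=S_0$ and $x$ small: the left side is $S_0|e^x-1|\sim S_0|x|$ while the right side is $S_0^2(e^x-1)^2\sim S_0^2x^2$, so no constant $C$ works), and the naive Lipschitz bound indeed only yields $O(\sqrt{t})$, which is the same order as the local-time term. Gaining the extra $o(1)$ from the fact that $S_s$ spends little time near $S_0$ is exactly what is needed.

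Two ingredients of your Step (ii) are asserted rather than proved. First, the ``Aronson-type'' bound $\mathbb{P}(|S_s-S_0|\le u)\le Cu/\sqrt{s}$ is a density estimate that does not follow from Assumptions \ref{chp4.A.exp}, \ref{chp4.I} and $\delta\ge\epsilon$ for a general It\^o semimartingale with jumps. What the hypotheses do give is the time-integrated version $\int_0^t\mathbb{P}(|S_s-S_0|\le u)\,ds\le C u\sqrt{t}/(\epsilon^2(S_0-u)^2)$, via the occupation-time formula $\int_0^t 1_{\{|S_s-S_0|\le u\}}S_s^2\delta_s^2\,ds=\int_{S_0-u}^{S_0+u}L_t^a(S)\,da$ combined with $\sup_a\mathbb{E}[L_t^a(S)]\le C\sqrt{t}$ (which follows from Tanaka--Meyer and $\mathbb{E}[(S_t-S_0)^2]=O(t)$). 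Since your estimate is only ever used after integration in $s$, this suffices, but the argument must be reorganized around it. Second, your large-jump term $\int_0^t ds\int_{|y|\ge\eta_s}|e^y-1|\,m(s,dy)$ is only shown to be $o(\sqrt{t})$ in the $\alpha$-stable-like case; in general the crude bound $|e^y-1|\le (e^y-1)^2/\eta_s$ returns $O(1/\sqrt{s})$ and hence only $O(\sqrt{t})$. You need a second cut at a fixed level $A$: the contribution of $|e^y-1|>A$ is $O(t/A)$, while that of $\sqrt{s}<|e^y-1|\le A$ is bounded by $\sqrt{t}\,\sup_{s\le t}\int_{|e^y-1|\le A}(e^y-1)^2 m(s,dy)$, which is made small by letting $A\to 0$ using Assumption \ref{chp4.A.exp}. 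Finally, note that in Step (iii) you (like the paper) identify $L^{S_0}(S)$ with the local time of the continuous exponential $S_0\exp(B_{[U]})$; in the presence of jumps this identification is not immediate, since the occupation density formula for $L^a(S)$ involves $d\langle S^c\rangle$ evaluated along the jumping path $S$, so this step deserves its own justification rather than a direct citation of the continuous case.
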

\begin{proof}
Applying  the Tanaka-Meyer formula  to $({S}_t-S_0)^+$ , we have
\begin{equation}
\begin{split}
  (S_{t}-S_0)^+&=\int_0^t 1_{ \{S_{s-}> S_0\}} dS_s + \frac{1}{2} L^{S_0}_{t} \\[0.1cm]
  &+ \sum_{0< s\leq t} (S_{s}-S_0)^+-(S_{s-}-S_0)^+-1_{ \{S_{s-}> S_0\}}\Delta S_{s}.
\end{split}
\end{equation}
As noted above, Assumption \ref{chp4.I} implies that the discounted
price $\hat{S}_t=e^{-\int_0^t r(s)\,ds}S_t$ is a martingale under
$\mathbb{P}$. So $(S_t)$ can be expressed as $dS_t=  e^{\int_0^t
r(s)\,ds}\,\left(r(t)S_{t-} dt+ d\hat{S}_t\right)$ and
$$
  \int_0^{t} 1_{ \{S_{s-}> S_0\}} dS_s = \int_0^{t} e^{\int_0^s r(u)\,du}\,1_{ \{S_{s-}> S_0\}} d\hat{S}_s+ \int_0^{t} e^{\int_0^s r(u)\,du}\,r(s) S_{s-} 1_{ \{S_{s-}> S_0\}} ds,
$$
where the first term is a martingale. Taking expectations, we get
 \begin{eqnarray}
    e^{\int_0^{t}r(s)\,ds}C(t,S_0)&=&\mathbb{E}\left[\int_0^{t}  e^{\int_0^s r(u)\,du}\,r(s) S_s \,1_{\{ S_{s-}> S_0\}} ds + \frac{1}{2}L^{S_0}_{t}\right]  \nonumber\\[0.1cm]
    & + &  \mathbb{E}\left[ \sum_{0< s\leq t} (S_{s}-S_0)^+-(S_{s-}-S_0)^+-1_{\{ S_{s-}> S_0\}}\Delta S_{s}\right].\nonumber
  \end{eqnarray}
Since $\hat{S}$ is a martingale,
\begin{equation}
\forall t\in[0,T]\quad \mathbb{E}\left[S_t\right]=e^{\int_0^t
r(s)\,ds}\,S_0<\infty.
\end{equation}
Hence $t\to\mathbb{E}\left[S_t\right]$ is right-continuous at $0$:
\begin{equation}
\lim_{t\to 0^+} \mathbb{E}\left[S_t\right]=S_0.
\end{equation}
Furthermore, under the Assumptions \ref{chp4.A} and \ref{chp4.H} for
$X_t=\log(S_t)$ (see equation (\ref{chp4.stoch.log})), one may apply
Theorem \ref{chp4.th.devpt.asympt} to the function
$$f:x\in\mathbb{R}\mapsto (\exp(x)-S_0)^2 ,$$
yielding
\begin{equation*}
 \lim_{t\to 0^+}\frac{1}{t}\,\mathbb{E}\left[(S_t-S_0)^2\right]= \mathcal{L}_{0}f(X_{0}),
\end{equation*}
where $\mathcal{L}_0$ is defined via equation (\ref{chp4.L1.eq}).
Since $\mathcal{L}_0 f(X_0)<\infty$, then in particular,
\begin{equation*}
t\mapsto\mathbb{E}\left[(S_t-S_0)^2\right]
\end{equation*}
is right-continuous at $0$ with right limit $0$. Let us show that
$$
t\in[0,T[\mapsto \mathbb{E}\left[ S_{t}\,1_{ \{S_{t}> S_0\}}\right]
$$
is right-continuous at $0$ with right limit $0$. Then applying Lemma
\ref{lemme.cad} yields
\begin{equation*}
\lim_{t\to 0^+} \frac{1}{t}\,\mathbb{E}\left[\int_0^{t} ds\,
S_{s}\,1_{ \{S_{s}> S_0\}}\right]=0
\end{equation*}
Observing that
$$
S_t \,1_{\{ S_{t}> S_{0}\}} = (S_t-S_0)^+ + S_0\,1_{\{S_t>S_0\}},
$$
we write
\begin{eqnarray*}
\left|\mathbb{E}\left[ S_{t}\,1_{ \{S_{t}> S_0\}}\right]\right|
&\leq& \mathbb{E}\left[\left|S_t-S_0\right|\right]+
S_0\mathbb{P}\left( S_t>S_0\right)
\end{eqnarray*}
Letting $t\to 0^+$ in the above inequalities yields
$$\mathbb{E}\left[\int_0^{t} r(s) S_s \,1_{\{ S_{s-}> S_0\}} ds\right]=o(t)=o(\sqrt{t}).$$
Let us now focus on the jump part,
\begin{eqnarray}
  &&\mathbb{E}\left[\sum_{0< s\leq t} (S_{s}-S_0)^+-(S_{s-}-S_0)^+-1_{ \{S_{s-}> S_0\}} \Delta S_{s}\right] \nonumber\\[0.1cm]
  &=& \mathbb{E}\left[\int_0^{t}ds\int m(s,dx)\,(S_{s-}e^x-S_0)^+-(S_{s-}-S_0)^+-1_{\{ S_{s-}> S_0\}}S_{s-}(e^x-1)\right]\nonumber\\[0.1cm]
\end{eqnarray}
Observing that
\begin{equation*}
\left|(ze^x-S_0)^+-(z-S_0)^+-1_{\{ z> S_0\}}z(e^x-1)\right|\leq
C\,(S_0e^x-z)^2,
\end{equation*}
then, together with Assumption \ref{chp4.A.exp} and Lemma
\ref{lemme.cad} implies,
$$\mathbb{E}\left[\sum_{0< s\leq t} (S_{s}-S_0)^+-(S_{s-}-S_0)^+-1_{ \{S_{s-}> S_0\}} \Delta S_{s}\right]=O(t)=o(\sqrt{t}).$$
Since $\delta_0\geq \epsilon$, equation
(\ref{asympt.local.time.disc}) yields the result.
\end{proof}

\begin{remark} As noted by Berestycki et al \cite{bbf,bbf2} in the diffusion case, the regularity of $f$ at $S_{t_0}$ plays a crucial role in the asymptotics of $\mathbb{E}\left[f(S_t)\right]$. Theorem \ref{chp4.th.devpt.asympt} shows that $\mathbb{E}\left[f(S_t)\right]\sim ct $ for smooth functions $f$, even if $f(S_{t_0})=0$, while for  call option prices we have $\sim\sqrt{t}$ asymptotics at-the-money where the function $x\to (x-S_0)_+$ is not smooth.
\end{remark}

\begin{remark}
In the particular case of a L\'{e}vy process, Proposition
\ref{chp4.prop.atm.pure.jump}, Proposition
\ref{chp4.prop.atm.pure.jump.infini} and Theorem
\ref{chp4.th.lim.call.option : ATM} imply \cite[Proposition 5, Proof
2]{tankov11}.
\end{remark}

\def\polhk#1{\setbox0=\hbox{#1}{\ooalign{\hidewidth
  \lower1.5ex\hbox{`}\hidewidth\crcr\unhbox0}}}

\end{document}